\newtheorem{assumption}{Assumption}
\newcommand{\assign}{\leftarrow}
\newcommand{\ARCTAN}{\mathop{\mathrm{atan2}}}
\newcommand{\dt}{\,\mathrm{d}t}
\newcommand{\ds}{\,\mathrm{d}s}
\newcommand{\pp}{\bm{p}}
\newcommand{\qq}{\bm{q}}
\renewcommand{\aa}{\bm{a}}
\newcommand{\bb}{\bm{b}}
\newcommand{\cc}{\bm{c}}
\newcommand{\FLOAT}[1]{\mathrm{fl}\left(#1\right)}
\newcommand{\ANGLE}[1]{\mathcal{A}\left(#1\right)}
\newcommand{\umach}{{\color{red}u}}
\newcommand{\DOT}{\!\cdot\!}
\newcommand{\CROSS}{\times}
\newcommand{\tgamma}{\widetilde{\bm\gamma}}
\newcommand{\talpha}{\widetilde{\bm\alpha}}
\newcommand{\tbeta}{\widetilde{\bm\beta}}
\newcommand{\tomega}{\widetilde{\bm\omega}}
\newcommand{\tL}{\widetilde{L}}
\newcommand{\ttheta}{\widetilde{\theta}}
\newcommand{\tsigma}{\widetilde{\sigma}\,}
\newcommand{\tx}{\widetilde{x}\,}
\newcommand{\ty}{\widetilde{y}\,}
\newcommand{\hx}{\widehat{x}\,}
\newcommand{\hy}{\widehat{y}\,}
\begin{document}

\title{Efficient computation of Linking number with certification}

\author{Enrico Bertolazzi \and Riccardo Ghiloni \and Ruben Specogna}

\institute{%
Enrico Bertolazzi \at
Department of Industrial Engineering -- University of Trento, Italy \\
\email{enrico.bertolazzi@unitn.it}
\and
Riccardo Ghiloni \at
Department of Mathematics -- University of Trento, Italy \\
\email{ghiloni@science.unitn.it}
\and
Ruben Specogna \at
Dipartimento Politecnico di Ingegneria ed Architettura -- University of Udine, Italy \\
\email{Ruben.Specogna@uniud.it}
}
\date{}

\maketitle

\begin{abstract}
  An efficient numerical algorithm for the computation of linking number 
  is presented. The algorithm keep tracks or rounding error so that 
  it can ensure the correctness of the results.
\end{abstract}

\keywords{Linking Number \and Computational Topology}



\section{Linking and writhe number as angles summation}
The Linking number of two closed parametric continuous curves
\begin{EQ}
  \left\{
  \begin{array}{l}
    \pp:[0,N]\to\mathbbm{R}^3\\
    \qq:[0,M]\to\mathbbm{R}^3
  \end{array}
  \right.
  \qquad
  \left\{
  \begin{array}{l}
    \pp(0)=\pp(N)\\
    \qq(0)=\qq(M)
  \end{array}
  \right.
\end{EQ}
well separated, i.e.,
\begin{EQ}
  \delta = \min_{s\in[0,N], t\in[0,M]} \norm{\pp(s)-\qq(t)} > 0
\end{EQ}
is given by the double integral \cite{Ricca:2011,Oosterom:1983}:
\begin{EQ}\label{eq:L}
   L(\pp,\qq)= \dfrac{1}{4\pi}
   \int_{0}^N
   \int_{0}^M
   \dfrac{(\qq(s)-\pp(t))\DOT(\qq'(s)\CROSS\pp'(t))}
         {\norm{\qq(s)-\pp(t)}^3}\ds\dt\,.
\end{EQ}
Writhe number has a similar definition~\cite{Berger:2009,Agarwal:2004,Fuller:1971}
and is connected with linking number as $W(\pp)=L(\pp,\pp)$,
where the integral~\eqref{eq:L} for $W(\pp)$ becomes
singular and in this case the principal value must be considered.

In practical numerical computation we restrict curves 
to polygonal closed curves defined as:
\begin{EQ}[rcllcllcl]\label{eq:pq}
  \pp(t) &=& \pp_{i} + (t-i)(\pp_{i+1}-\pp_i) \quad &
  \textrm{for $t$} &\in&[i,i+1], \quad & 
  i&=&1,2\ldots,N \\
  \qq(s) &=& \qq_{j} + (s-j)(\qq_{j+1}-\qq_j) \quad &
  \textrm{for $s$}&\in&[j,j+1], \quad &
  j&=&1,2,\ldots,M \\
\end{EQ}
where $N$ and $M$ are the number of segments of the 
first and second curves, respectively.
Curves $\pp(t)$ and $\qq(t)$ are closed so that  
points $\pp_k$ and $\qq_k$ must satisfy $\pp_{N+1}=\pp_1$ and
$\qq_{M+1}=\qq_1$.
If the approximation of continuous curve with polygons
does not add additional crossing the Linking number does not change
while the Writhe number does not change too much if the
approximating curves are close to the original one,
see~\cite{Cantarella:2005}.

Integral~\eqref{eq:L} with polygonal curves~\eqref{eq:pq} can be broken 
as the sum of $NM$ integrals:
\begin{EQ}[rcl]\label{eq:Lij:sum}
  L(\pp,\qq) &=& \dfrac{1}{2\pi} \sum_{i=1}^N\sum_{j=1}^M \Delta\Theta_{ij},\quad
  \Delta\Theta_{ij} = T(\pp_i,\pp_{i+1},\qq_j,\qq_{j+1}) \\
  T(\pp,\pp',\qq,\qq') &=& \dfrac{1}{2}
  \int_{0}^1
  \int_{0}^1
  \dfrac{(\qq-\pp + s\Delta\qq-t\Delta\pp)
         \DOT(\Delta\pp\CROSS\Delta\qq)}
         {\norm{\qq-\pp + s\Delta\qq-t\Delta\pp}^3}\ds\dt\, \\
   \Delta\pp&=&\pp'-\pp \qquad
   \Delta\qq=\qq'-\qq
\end{EQ}
The summation of all the $\Delta\Theta_{ij}$ is an integer multiple of $2\pi$, this number is the Linking Number.
The angle $T(\pp,\pp',\qq,\qq')$ has the properties
\begin{enumerate}
  \item $T(\pp,\pp',\qq,\qq')=T(\qq,\qq',\pp,\pp')$;
  \item $T(\pp,\pp',\qq,\qq')=-T(\pp',\pp,\qq,\qq')$;
  \item $T(\pp,\pp',\pp,\qq')=0$;
  \item $T(\pp,\pp,\qq,\qq')=0$;
\end{enumerate}
property $2$ means that if one curve is reversed the linking number
change the sign. While properties $3$ and $4$ means that if two point touch the
angle is $0$.
Properties $3$ and $4$ are important for Writhe number and means 
that this number is stable for small deformation of the curve.
Property $5$ is useful in numerical computation because the
value can be scaled be


\section{Angle triples}
In the next section will be showed that the angles $\Delta\Theta_{ij}$ can be computed as $NM$ differences of 
two evaluation of the four quadrant function $\ARCTAN(y,x)$.
\begin{definition}\label{atan2:def}
  The value of the function $\ARCTAN(y,x)$ defined for $(x,y)\in\mathbbm{R}^2\setminus\{(0,0)\}$ 
  is the angle $\theta$ obtained as the unique solution of the problem
  \begin{EQ}\label{eq:atandef}
    \theta=\ARCTAN(y,x), \quad\textrm{$\theta$ solution of:}\quad
    \cases{
      R\cos\theta = x, & \\
      R\sin\theta = y, &
    }\textrm{with}\;
    \cases{
       R>0, & \\
       \theta\in(-\pi,\pi]. &
    }
  \end{EQ}
\end{definition}
The computation of $\ARCTAN(\cdot,\cdot)$ can be completely removed
in the computation of Linking number while is reduced to only one computation
for the computation of Writhe number.

Consider the vector $(x,y)^T$ then $\ARCTAN(y,x)$
is the angle of this vector respect to the $x-$axis.
Thus, instead of the computation of $\ARCTAN(y,x)+\ARCTAN(y',x')$
we can consider the vector $(x'',y'')^T$ obtained by turning $(x',y')^T$
by the angle $\theta=\ARCTAN(y,x)$, i.e.
\begin{EQ}\label{eq:theta:to:mat}
  \pmatrix{ x''\\ y''} = 
  \pmatrix{ \cos\theta & -\sin\theta \\ \sin\theta & \cos\theta }
  \pmatrix{ x'\\ y'},\qquad
  \theta = \ARCTAN(y,x).
\end{EQ}
With this relation we have the identity $\ARCTAN(y,x)+\ARCTAN(y',x')=\ARCTAN(y'',x'')$
unless  $\ARCTAN(y,x)+\ARCTAN(y',x')$ is greater than $\pi$ or less than $-\pi$.
In this latter case $2\pi$ must be added or subtracted.
Notice that by set $R=\sqrt{x^2+y^2}$ equation~\eqref{eq:theta:to:mat} can be written as
\begin{EQ}\label{eq:rot}
  \pmatrix{ x''\\ y''} = 
  \dfrac{1}{R}
  \pmatrix{ x & -y \\ y & x }
  \pmatrix{ x'\\ y'}=
  \dfrac{1}{R}
  \pmatrix{ xx'-yy'\\ xy'+yx'},
\end{EQ}
and thus observing that $\ARCTAN(y,x)=\ARCTAN(\alpha y,\alpha x)$
for all $\alpha>0$ we have that any angle $\theta$ can be represented 
by a triple $[x,y,\sigma]$ where $(x,y)^T\in\mathbbm{R}^2\setminus \{(0,0)\}$ and $\sigma\in\mathbbm{Z}$ 
as $\theta = \ARCTAN(y,x) + 2\pi\sigma$ and summation can be 
represented as matrix vector multiplication \eqref{eq:rot}.
This suggest the following definition:
\begin{definition}\label{triple:def}
  The triple $[x,y,\sigma]\in \left(\mathbbm{R}^2\setminus \{(0,0)\}\right)\times\mathbbm{Z}$ is mapped to the angle $\theta$ 
  by the function $\mathcal{A}: \left(\mathbbm{R}^2\setminus \{(0,0)\}\right)\times\mathbbm{Z}\to\mathbbm{R}$
  defined as
  \begin{EQ}
     \theta = \ANGLE{[x,y,\sigma]} =\ARCTAN(y,x)+2\pi \sigma
  \end{EQ}
  two triple $[x,y,\sigma]$ and $[x',y',\sigma']$ are equivalent, 
  i.e $[x,y,\sigma]\equiv [x',y',\sigma']$,
  if $\ANGLE{[x,y,\sigma]}=\ANGLE{[x',y',\sigma']}$, i.e. when 
  the triples correspond to the same angle. Equivalently
  when $x'=\alpha x$, $y'=\alpha y$ and $\sigma'=\sigma$ for
  $\alpha>0$.
  
\end{definition}


\subsection{Angle triples summation and properties}
To compute efficiently angle summation in~\eqref{eq:Lij:sum}
an operative definition of triples summation is necessary.
For this definition the sign of a point in the plane
with the origin removed $\mathbbm{R}^2\setminus \{(0,0)\}$ will be used forward 
to detect crossing with the positive $x-$axes.
\begin{definition}[Point sign]\label{def:2}
  For a point $(x,y)$ in $\mathbbm{R}^2\setminus \{(0,0)\}$ 
  the sign $s(x,y)$ is the function 
  \begin{EQ}[rcl]\label{eq:sign}
    s(x,y) &=& \cases{
      +1 & if $(x,y)\in Q^+$,\\
      -1 & if $(x,y)\in Q^-$,
    } \\
    \textrm{where} &&
    \cases{
    Q^+ = \big\{ (x,y)\,|\, (y>0) \;\mathrm{or}\; ((y=0)\;\mathrm{and}\; (x<0)) \big\}, & \\
    Q^- = \big\{ (x,y)\,|\, (y<0) \;\mathrm{or}\; ((y=0)\;\mathrm{and}\; (x>0)) \big\}. &
    }
  \end{EQ}
  moreover the region $Q^+$ and $Q^-$ satisfy 
  $Q^+ \cup Q^-=\mathbbm{R}^2\setminus \{(0,0)\}$ and $Q^+ \cap Q^-=\emptyset$
  see Figure~\ref{fig:Qregion}.
\end{definition}
\begin{definition}[Cross detection]\label{def:3}
  Given the points $(x,y)$, $(x',y')$ and $(x'',y'')$
  in $\mathbbm{R}^2\setminus \{(0,0)\}$
  the cross detection function for the $x$-axes on the left half plane is defined as
  \begin{EQ}\label{eq:sigma}
    \mathcal{C}(x,y,x',y',x'',y'') = \cases{
      s(x,y) & if $s(x,y)s'>0$ and $s(x,y)s''< 0$, \\
      0 & otherwise. \\
    }
  \end{EQ}
  where $s'=s(x',y')$ and $s''=s(x'',y'')$.
  The values of the function are in the set 
  $\{0,1,-1\}$ where $0$ mean no cross $1$ crossing counter-clockwise 
  and $-1$ crossing clockwise.
  This function will be used in the next Lemma
  with sign function~\eqref{eq:sign} to detect 
  a turn around the origin of a 2D polygon.
\end{definition}
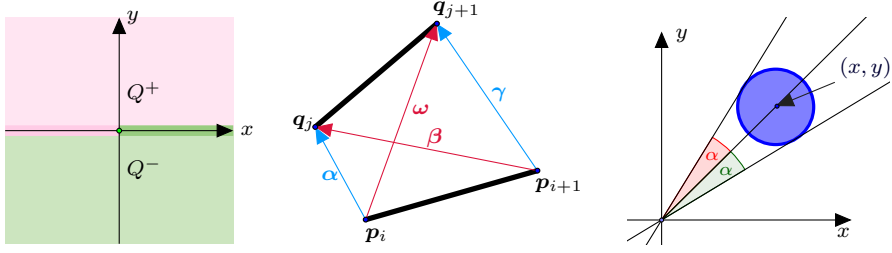
\begin{figure}[t]
    \begin{center}
    \definecolor{zzccff}{rgb}{0.6,0.8,0.5}
    \definecolor{ffcctt}{rgb}{1,0.8,0.9}
    \begin{tikzpicture}[>=triangle 45,x=1.0cm,y=1.0cm,scale=0.3]
      \fill[color=zzccff,fill=zzccff,fill opacity=0.5] (-5,-5) -- (5,-5) -- (5,0) -- (-5,0) -- cycle;
      \fill[color=ffcctt,fill=ffcctt,fill opacity=0.5] (-5,5) -- (-5,0) -- (5,0) -- (5,5) -- cycle;
      \draw [line width=4pt,color=ffcctt] (-5,0)-- (0,0);
      \draw [line width=4pt,color=zzccff] (0,0)-- (5,0);
      \draw [->] (-5,0) -- (5,0);
      \draw [->] (0,-5) -- (0,5);
      \draw (0,2.5)  node[anchor=north west ] {$Q^+$};
      \draw (0,-2.5) node[anchor=south west ] {$Q^-$};
      \draw (5,0)    node[anchor=west ] {$x$};
      \draw (0,5)    node[anchor=west ] {$y$};
      \draw [fill=green] (0,0) circle (3pt);
      \clip(-5,-5) rectangle (5,5);
    \end{tikzpicture}
    \definecolor{dcrutc}{rgb}{0.8627450980392157,0.0784313725490196,0.23529411764705882}
    \definecolor{qqzzff}{rgb}{0.0,0.6,1.0}
    \definecolor{qqqqff}{rgb}{0.0,0.0,1.0}
    \begin{tikzpicture}[line cap=round,line join=round,>=triangle 45,x=1.0cm,y=1.0cm,scale=0.6]
      \clip(-2.2,0.18) rectangle (4.4,5.75);
      \draw [line width=2.0pt] (-0.42519782090679226,0.7216273992108917)-- (3.339451241157656,1.8045491036354429);
      \draw [line width=2.0pt] (-1.530511337526607,2.7696331512405536)-- (1.1349683261595518,5.053717195937718);
      \draw (-0.6,0.7) node[anchor=north west] {$\pp_i$};
      \draw (3.131586984750401,1.8) node[anchor=north west] {$\pp_{i+1}$};
      \draw (-2.2,3.2) node[anchor=north west] {$\qq_j$};
      \draw (0.88,5.7) node[anchor=north west] {$\qq_{j+1}$};
      \draw [->,color=qqzzff] (-0.42519782090679226,0.7216273992108917) -- (-1.5305113375266068,2.7696331512405536);
      \draw [->,color=qqzzff] (3.339451241157656,1.8045491036354429) -- (1.1349683261595516,5.053717195937718);
      \draw [->,color=dcrutc] (3.339451241157656,1.8045491036354429) -- (-1.5305113375266073,2.7696331512405536);
      \draw [->,color=dcrutc] (-0.42519782090679226,0.7216273992108917) -- (1.134968326159552,5.053717195937718);
      \draw [fill=qqqqff] (-0.42519782090679226,0.7216273992108917) circle (1.5pt);
      \draw [fill=qqqqff] (3.339451241157656,1.8045491036354429) circle (1.5pt);
      \draw [fill=qqqqff] (-1.530511337526607,2.7696331512405536) circle (1.5pt);
      \draw [fill=qqqqff] (1.1349683261595518,5.053717195937718) circle (1.5pt);
      \draw[color=qqzzff] (-1.2,1.7006169754318154) node {$\bm\alpha$};
      \draw[color=qqzzff] (2.5,3.5) node {$\bm\gamma$};
      \draw[color=dcrutc] (1.0826393144503146,2.532074001060834) node {$\bm\beta$};
      \draw[color=dcrutc] (0.7856903767256644,3.155666770282598) node {$\bm\omega$};
    \end{tikzpicture}
    \definecolor{qqwuqq}{rgb}{0.0,0.39215686274509803,0.0}
    \definecolor{ffqqqq}{rgb}{1.0,0.0,0.0}
    \definecolor{sqsqsq}{rgb}{0.12549019607843137,0.12549019607843137,0.12549019607843137}
    \definecolor{qqqqtt}{rgb}{0.0,0.0,0.2}
    \definecolor{xdxdff}{rgb}{0.49019607843137253,0.49019607843137253,1.0}
    \definecolor{qqqqff}{rgb}{0.0,0.0,1.0}
    \begin{tikzpicture}[line cap=round,line join=round,>=triangle 45,x=1.0cm,y=1.0cm,scale=0.5]
    \clip(0.1,0.36) rectangle (7,6.37);
    \draw [line width=1.2pt,color=qqqqff,fill=qqqqff,fill opacity=0.5] (4.0,4.0) circle (1.0cm);
    \draw [shift={(1.0,1.0)},color=ffqqqq,fill=ffqqqq,fill opacity=0.15] (0,0) -- (45.0:2.5684082486587503) arc (45.0:58.63302222536642:2.5684082486587503) -- cycle;
    \draw [shift={(1.0,1.0)},color=qqwuqq,fill=qqwuqq,fill opacity=0.1] (0,0) -- (31.366977774633593:2.5684082486587503) arc (31.366977774633593:45.0:2.5684082486587503) -- cycle;
    \draw [->] (-3.0,1.0) -- (6.0,1.0);
    \draw [->] (1.0,-2.0) -- (1.0,6.0);
    \draw [domain=0.01164270296947321:7.365401057024001] plot(\x,{(--1.3743685418725535--2.1461490623970567*\x)/3.5205176042696102});
    \draw [domain=0.01164270296947321:7.365401057024001] plot(\x,{(-1.3743685418725535--3.5205176042696102*\x)/2.1461490623970567});
    \draw [domain=1.0:7.365401057024001] plot(\x,{(-0.0--3.0*\x)/3.0});
    \draw (5.42,1.0) node[anchor=north west] {$x$};
    \draw (1.17,6.28) node[anchor=north west] {$y$};
    \draw [color=qqqqtt](5.472889715907026,5.52569991597516) node[anchor=north west] {$(x,y)$};
    \draw [->,color=sqsqsq] (5.54802848403337,4.6487817766746655) -- (4.036477218076305,4.018968749192554);
    \begin{scriptsize}
    \draw [fill=xdxdff] (1.0,1.0) circle (1.5pt);
    \draw [fill=qqqqff] (4.036477218076305,4.018968749192554) circle (1.5pt);
    \draw[color=ffqqqq] (2.33,2.7) node {$\alpha$};
    \draw[color=qqwuqq] (2.7,2.3) node {$\alpha$};
    \end{scriptsize}
    \end{tikzpicture}
  \end{center}
  \caption{On the left regions $Q=\mathbbm{R}^2\setminus\{(0,0)\}$, $Q^+$ and $Q^-$.
  On the center the vectors $\bm\alpha$, $\bm\beta$, $\bm\gamma$ and $\bm\omega$ used in the 
  computation of $L_{ij}$ or $\theta_{ij}$.
  On the right the geometric problem for the estimation of angle error from
  the error on the triple $[x,y,\sigma]$.}
  \label{fig:Qregion}
\end{figure}

\begin{lemma}\label{lemma:2}
  Let be $(x,y)$ and $(x',y')$ two points in $\mathbbm{R}^2\setminus \{(0,0)\}$ and
  \begin{EQ}
    x'' = xx'-yy', \qquad 
    y'' = xy'+yx'
  \end{EQ}
  then the following identity is true
  \begin{EQ}\label{eq:addarctan}
     \ARCTAN(y,x)+\ARCTAN(y',x') = \ARCTAN(y'',x'')
     +2\pi\,\mathcal{C}(x,y,x',y',x'',y'')
  \end{EQ}
\end{lemma}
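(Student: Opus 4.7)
My plan is to translate the statement into the multiplicativity of the argument of complex numbers, and then match the resulting integer correction with the cross-detection function $\mathcal{C}$ by a short case analysis on the sign function $s$.

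First, I would identify $(x,y)$, $(x',y')$, $(x'',y'')$ with the complex numbers $z=x+iy$, $z'=x'+iy'$, $z''=x''+iy''$. The formulas $x''=xx'-yy'$ and $y''=xy'+yx'$ express exactly that $z''=zz'$, and since $|z|,|z'|>0$ one has $|z''|>0$, so $\ARCTAN(y'',x'')$ is well defined. From $\arg(zz')\equiv \arg(z)+\arg(z')\pmod{2\pi}$ I obtain an integer $k$ with
\begin{EQ}
\ARCTAN(y,x)+\ARCTAN(y',x')=\ARCTAN(y'',x'')+2\pi k.
\end{EQ}
Since each of the three $\ARCTAN$ values lies in $(-\pi,\pi]$, the left-hand side lies in $(-2\pi,2\pi]$, which forces $k\in\{-1,0,1\}$. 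The entire content of the lemma thus reduces to showing $k=\mathcal{C}(x,y,x',y',x'',y'')$.

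Second, I would split into three cases according to the signs of $(x,y)$ and $(x',y')$. If $s(x,y)=s(x',y')=+1$, both angles lie in $(0,\pi]$, so their sum lies in $(0,2\pi]$, and it exceeds $\pi$ iff $\ARCTAN(y'',x'')\in(-\pi,0]$, which, by the tie-breaking in Definition~\ref{def:2}, is precisely $s(x'',y'')=-1$; in that subcase $k=+1=\mathcal{C}$, otherwise $k=0=\mathcal{C}$. If $s(x,y)=s(x',y')=-1$, the symmetric argument yields $k=-1=\mathcal{C}$ when $s(x'',y'')=+1$ and $k=0=\mathcal{C}$ otherwise. If $s(x,y)\neq s(x',y')$, the two angles lie in $(0,\pi]$ and $(-\pi,0)$ in some order, their sum lies strictly in $(-\pi,\pi)$, so $k=0$; at the same time $\mathcal{C}=0$ because $s(x,y)s(x',y')<0$.

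The main obstacle I expect is not conceptual but purely bookkeeping at the boundary angles $\theta\in\{0,\pm\pi\}$. The asymmetric convention placing the negative $x$-axis in $Q^+$ and the positive $x$-axis in $Q^-$ is exactly what makes the equivalences ``$\ARCTAN(y,x)\in(0,\pi]\Leftrightarrow s(x,y)=+1$'' hold as stated; consequently the only subtle subcases are those in which $\theta+\theta'$ equals $\pi$ or $-\pi$ exactly, so that $\ARCTAN(y'',x'')$ equals $\pi$ or $0$. Once these are correctly matched against the value of $s(x'',y'')$ given by the tie-breaking rule, the three cases above close the proof with no further computation.
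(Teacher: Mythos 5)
Your proof is correct, and it reaches the identity by a genuinely different route from the paper. The paper starts from the classical addition formula $\arctan u + \arctan v = \arctan\frac{u+v}{1-uv}+n\pi$ with $u=y/x$, $v=y'/x'$, converts each one-argument $\arctan$ back to $\ARCTAN$ with quadrant corrections, and then chases the resulting multiple of $\pi$ through a four-case table; you instead read $x''=xx'-yy'$, $y''=xy'+yx'$ as the complex product $z''=zz'$ and invoke $\arg(zz')\equiv\arg z+\arg z'\pmod{2\pi}$, which immediately gives the identity up to an integer $k\in\{-1,0,1\}$, reducing everything to matching $k$ with $\mathcal{C}$. Your route buys two things: it avoids the divisions by $x$ and $x'$ (the paper's substitution $u=y/x$, $v=y'/x'$ silently excludes points on the $y$-axis, which would need separate treatment), and it keeps the correction in $2\pi\mathbbm{Z}$ from the outset instead of passing through odd multiples of $\pi$. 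It also handles the boundary rays cleanly: the equivalences $s(x,y)=+1\Leftrightarrow\ARCTAN(y,x)\in(0,\pi]$ and $s(x,y)=-1\Leftrightarrow\ARCTAN(y,x)\in(-\pi,0]$ are exactly right under Definition~\ref{def:2}, whereas the paper's case table lists the ranges as $[0,\pi)$ and $[-\pi,0)$, which is inconsistent with the stated convention $\theta\in(-\pi,\pi]$. The final case analysis on the signs $s,s',s''$ is essentially the same in both arguments. One tiny slip in your mixed-sign case: when $s(x',y')=-1$ the angle lies in $(-\pi,0]$, not $(-\pi,0)$, so the sum lies in $(-\pi,\pi]$ rather than strictly in $(-\pi,\pi)$; this still forces $k=0$ because $(-\pi,\pi]$ is precisely the principal range, so the conclusion is unaffected.
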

\begin{proof}
The classical identity~\cite{Abramowitz:1965} about inverse tangent
taking care on quadrant changes is:
\begin{EQ}
   \arctan u + \arctan v = \arctan \dfrac{u+v}{1-uv}+n\pi,\qquad
   n = \cases{
      0 & $uv \leq 1$ \\[-0.25em]
      1 & if $u>0$ \\[-0.25em]
      -1 & if $u<0$\\[-0.25em]
   }
\end{EQ}
using the definition of $\ARCTAN$ the equality simplify to
$\arctan u + \arctan v = \ARCTAN(u+v,1-uv)$ (see~\cite{Bradford:2002}) 
so that by set $u=y/x$ and $v=y'/x'$
\begin{EQ}[rcl]\label{eq:lem:1}
  \arctan\frac{y}{x}+\arctan\frac{y'}{x'}
  &=& \ARCTAN\left(\frac{xy'+x'y}{xx'},\frac{xx'-yy'}{xx'}\right) \\
  &=& \ARCTAN\left(xy'+x'y,xx'-yy'\right)+\sigma\pi,
\end{EQ}
with $\sigma\in\{0,+1,-1\}$, moreover
\begin{EQ}\label{eq:lem:2}
  \ARCTAN\left(y,x\right) = 
  \arctan\frac{y}{x}+\cases{
    0 & if $x\geq 0$ \\[-0.25em]
    -\pi & if $x<0$ and $y<0$ \\[-0.25em]
    +\pi & if $x<0$ and $y\geq 0$ \\[-0.25em]
  }
\end{EQ}
using~\eqref{eq:lem:1} and~\eqref{eq:lem:2}
\begin{EQ}
  \ARCTAN(y,x)+\ARCTAN(y',x')
  =\ARCTAN\left(xy'+x'y,xx'-yy'\right)+\sigma\pi,
\end{EQ}
and $\sigma\in\{-2,-1,0,1,2\}$. The change of values of $\sigma$ is done when $(x,y)$ and $(x',y')$ change
between $Q^+$ and $Q^-$ so that we have $4$ cases:
\begin{EQ}[llccc]
   \quad&\quad&\;\ARCTAN(y,x)\;&\;\ARCTAN(y',x')\;&\ARCTAN(y,x)+\ARCTAN(y',x')\\
   (A) & \cases{(x,y)\in Q^+ \\ (x',y')\in Q^+}
   & [0,\pi)  & [0,\pi)  & [0,2\pi) 
   \\
   (B) & \cases{(x,y)\in Q^+ \\ (x',y')\in Q^-} & [0,\pi)  & [-\pi,0) & [-\pi,\pi)\\
   (C) & \cases{(x,y)\in Q^- \\ (x',y')\in Q^+} & [-\pi,0) & [0,\pi)  & [-\pi,\pi)\\
   (D) & \cases{(x,y)\in Q^- \\ (x',y')\in Q^-} & [-\pi,0) & [-\pi,0) & [-2\pi,0)
\end{EQ}
and the resulting $\sigma$:
\begin{EQ}[ll]
  (A) \qquad  & \kappa=
   \cases{
    0 & angle in $[0,\pi)$ \\
    2 & angle in $[\pi,2\pi)$ \\   
   }\\
  (B) & \kappa=0\\
  (C) & \kappa=0\\
  (D) & \kappa=
   \cases{
    -2 & angle in $[-2\pi,-\pi)$ \\
    0 & angle in $[-\pi,0)$ \\   
   }
\end{EQ}
thus, equation~\eqref{eq:addarctan} is true with $\sigma\in\{0,-1,1\}$.
If the points $(x,y)\in Q^+$ and $(x',y')\in Q^-$ 
or $(x',y')\in Q^+$ and $(x,y)\in Q^-$ 
summation cant be larger than $\pi$ and thus $\sigma = 0$.
If both $(x,y)\in Q^+$ and $(x',y')\in Q^+$ there is a crossing 
if  $(x'',y'')\in Q^-$ and thus $\sigma=+1$.
If both $(x,y)\in Q^-$ and $(x',y')\in Q^-$ there is a crossing 
if  $(x'',y'')\in Q^+$ and thus $\sigma=-1$.
This changes are resumed in function~\eqref{eq:sigma}.
\qed

\end{proof}
Lemma~\ref{lemma:2} suggest the following definition for the addition
of two triple:
\begin{definition}\label{triple:plus:def}
  The sum $[x,y,\sigma]\oplus[x',y',\sigma']$ of two triple
  $[x,y,\sigma]$ and $[x',y',\sigma']$ is the triple $[x'',y'',\sigma'']$
  defined as:
  \begin{EQ}\label{triple:plus:eq}
     \cases{
       x''= \alpha(xx'-yy'), &\\
       y''=\alpha(xy'+yx'), &\\
     }
     \sigma''= \sigma+\sigma'+\mathcal{C}(s(x,y),s(x',y'),s(x'',y'')),
     \quad
  \end{EQ}
  with $\alpha$ any positive real number and $\mathcal{C}(s(x,y),s(x',y'),s(x'',y''))$ defined in 
  equation~\eqref{eq:sigma}.
\end{definition}
For the triple introduced in definition~\ref{triple:def} with 
addition of definition~\ref{triple:plus:def} is trivial to 
prove the identities
\begin{EQ}[rcl]
   \ANGLE{[x,y,\sigma]\oplus[x',y',\sigma']}&=&\ANGLE{[x,y,\sigma]}+\ANGLE{[x',y',\sigma']},\\
   \ANGLE{[x,y,\sigma]\oplus[x,-y,-\sigma]}&=&\ANGLE{[1,0,0]},
\end{EQ}
which suggest the definition of the subtraction
\begin{EQ}
   [x,y,\sigma]\ominus[x',y',\sigma']:=[x,y,\sigma]\oplus[x',-y',-\sigma']
\end{EQ}
and the following identity
\begin{EQ}
   \ANGLE{[x,y,\sigma]\ominus[x',y',\sigma']}=\ANGLE{[x,y,\sigma]}-\ANGLE{[x',y',\sigma']},
\end{EQ}
easily follow.
Finally multiplication by a scalar $w\in\mathbbm{Z}$ can be defines as
\begin{EQ}
   w\cdot [x,y,\sigma] = \cases{
   [1,0,0] & for $w=0$ \\
    \underbrace{[x,y,\sigma]\oplus [x,y,\sigma] \oplus \cdots \oplus [x,y,\sigma]}_{\textrm{$w$ times}} & for $w>0$ \\
    \underbrace{[x,-y,-\sigma]\oplus [x,-y,-\sigma] \oplus \cdots \oplus [x,-y,-\sigma]}_{\textrm{$|w|$ times}} & for $w<0$
   }
\end{EQ}

\section{Linking number as summation of triples}

With the following lemma the contribution $\Delta\Theta_{ij}=T(\pp_i,\pp_{i+1},\qq_j,\qq_{j+1})$
can be written in term of a single triple $[x_{ij},y_{ij},\sigma_{ij}]$ as  $\Delta\Theta_{ij}=\ANGLE{[x_{ij},y_{ij},\sigma_{ij}]}$.
\begin{lemma}\label{lem:3}
  Let $\pp$, $\pp'$, $\qq$ and $\qq'$ such that
  the space segment $[\pp,\pp']$ and  $[\qq,\qq']$ \textbf{do not intersect}, then defining
  \begin{EQ}[rclrclrclrcl]\label{eq:abgm:def}
    \bm\alpha &=& \qq-\pp,\quad&
    \bm\beta  &=& \qq-\pp',\quad&
    \bm\gamma &=& \qq'-\pp',\quad&
    \bm\omega &=& \qq'-\pp,\\
    \talpha &=& \bm\alpha/\norm{\bm\alpha},\quad&
    \tbeta  &=& \bm\beta/\norm{\bm\beta},\quad&
    \tgamma &=& \bm\gamma/\norm{\bm\gamma},\quad&
    \tomega &=& \bm\omega/\norm{\bm\omega},
  \end{EQ}
  and
  \begin{EQ}\label{eq:xy}
    \cases{
      x\,\, = 1+\talpha\DOT\tgamma+\talpha\DOT\tbeta+\tbeta\DOT\tgamma,\\
      y\,\, = \tbeta\DOT(\talpha\CROSS\tgamma),
    }
    \qquad
    \cases{
      x' = 1+\talpha\DOT\tgamma+\talpha\DOT\tomega+\tomega\DOT\tgamma, \\
      y' = \tomega\DOT(\talpha\CROSS\tgamma), 
    }
  \end{EQ}
  it follows that $T(\pp,\pp',\qq,\qq')$ take the form
  \begin{EQ}\label{eq:sigma2}
     T(\pp,\pp',\qq,\qq')=\ANGLE{[x'',y'',\sigma'']},\\[1em]
     \sigma''=\cases{
       -s(x'',y'') & if $s(x'',y'')y>0$ \\
       0 & otherwise
    }
    \qquad
    \cases{
      x'' = xx'+yy', \\
      y'' = xy'-yx',
    }
  \end{EQ}
  Finally the points $(x,y)$, $(x',y')$ and $(x'',y'')$ are all different
  from $(0,0)$.
\end{lemma}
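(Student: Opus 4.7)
The approach is to interpret $T(\pp,\pp',\qq,\qq')$ as (up to the overall sign fixed by~\eqref{eq:Lij:sum}) half the signed spherical area swept out by the map $G\colon[0,1]^2\to S^2$, $G(s,t)=r(s,t)/\|r(s,t)\|$, where $r(s,t)=\qq-\pp+s\Delta\qq-t\Delta\pp$. A short computation, using the identity $\bm u\cdot(\bm u\times\bm v)=0$ to discard $r$-directional terms, shows that the integrand of $T$ equals $G\cdot(\partial_sG\times\partial_tG)$, i.e.\ the pullback by $G$ of the area form on $S^2$. The non-intersection hypothesis guarantees $r(s,t)\neq 0$ on the whole unit square, so $G$ is well defined and the four corners of $[0,1]^2$ are mapped to $\talpha,\tomega,\tgamma,\tbeta$ in cyclic order.

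I would then split this spherical quadrilateral along the diagonal $\talpha\tgamma$ into two positively-oriented spherical triangles and apply to each the van~Oosterom--Strackee half-angle identity
\[
\tan(\Omega/2)=\frac{\aa\cdot(\bb\times\cc)}{1+\aa\cdot\bb+\bb\cdot\cc+\aa\cdot\cc},
\]
or equivalently its atan2 form $\Omega/2=\ARCTAN(\aa\cdot(\bb\times\cc),\,1+\aa\cdot\bb+\bb\cdot\cc+\aa\cdot\cc)$. The scalar triple and inner products appearing are exactly the entries $(x,y)$ and $(x',y')$ of~\eqref{eq:xy} (up to a sign determined by each triangle's orientation), so $T$ equals a signed sum of $\ARCTAN(y,x)$ and $\ARCTAN(y',x')$ modulo~$2\pi$. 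Combining the two via Lemma~\ref{lemma:2} together with the rotation identity~\eqref{eq:rot} produces a single triple whose first two entries are, up to a positive real scalar, $x''=xx'+yy'$ and $y''=\pm(xy'-yx')$.

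The integer correction $\sigma''$ is then pinned down by a quadrant-by-quadrant analysis in the spirit of the proof of Lemma~\ref{lemma:2}: each $\ARCTAN$ value lies in $(-\pi,\pi]$, so their signed combination lies in $(-2\pi,2\pi]$ and at most one adjustment by $\pm 2\pi$ is required. Going through the four $Q^\pm$-combinations for $(x,y)$ and $(x',y')$ shows that the jump is triggered precisely when $s(x'',y'')$ and $y$ have the same sign, which is the compact rule in~\eqref{eq:sigma2}.

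Finally, for the non-vanishing claim I would exploit the identity $1+\aa\cdot\bb+\bb\cdot\cc+\aa\cdot\cc=4\cos(\theta_{ab}/2)\cos(\theta_{bc}/2)\cos(\theta_{ac}/2)$ valid when $\aa,\bb,\cc$ lie on a common great circle (which is the situation forced by $y=0$): its vanishing together with that of $y$ forces two of $\{\talpha,\tbeta,\tgamma\}$ to be antipodal, and each such coincidence (e.g.\ $\talpha=-\tbeta$ means $\qq\in[\pp,\pp']$; $\talpha=-\tgamma$ means $[\pp,\pp']$ and $[\qq,\qq']$ cross at a convex combination of their endpoints) contradicts the non-intersection hypothesis. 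The argument for $(x',y')\neq(0,0)$ is identical, and then $(x'',y'')\neq(0,0)$ follows because $(x+iy)(x'-iy')\neq 0$ in~$\mathbbm{C}$. The main obstacle I anticipate is the quadrant bookkeeping for $\sigma''$; the solid-angle identification and the non-vanishing reduction are essentially routine once the geometric picture is in place.
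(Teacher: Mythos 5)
Your overall route is essentially the paper's: decompose $T$ into the two van Oosterom--Strackee solid-angle terms attached to the spherical triangles $(\talpha,\tbeta,\tgamma)$ and $(\talpha,\tomega,\tgamma)$ (the paper simply cites \cite{ZinArai:2013,Oosterom:1983} for this step, where you re-derive it from the Gauss-map area form), use the scale invariance of $\ARCTAN$ to normalize, combine the two angles through Lemma~\ref{lemma:2}, and exclude $(x,y)=(0,0)$ geometrically. Your non-vanishing argument via the coplanar identity $1+\aa\DOT\bb+\bb\DOT\cc+\aa\DOT\cc=4\cos(\theta_{ab}/2)\cos(\theta_{bc}/2)\cos(\theta_{ac}/2)$ is a genuinely different and arguably cleaner route than the paper's two-case analysis on whether $\talpha\CROSS\tgamma=\bm{0}$; it leads to the same three antipodal configurations and the same contradiction with non-intersection, though you would still need to justify the orientation convention in the angle sum on the great circle and to treat the degenerate subcase $\talpha\CROSS\tgamma=\bm{0}$ (where $x$ can vanish with no pair antipodal unless $\talpha=-\tgamma$) with the same care the paper does.

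The genuine gap is in the determination of $\sigma''$. The compact rule in \eqref{eq:sigma2}, namely $\sigma''=-s(x'',y'')$ exactly when $s(x'',y'')y>0$, is \emph{not} what a blind four-case $Q^\pm$ analysis delivers: by Lemma~\ref{lemma:2} and Definition~\ref{triple:plus:def} the correction for $[x,-y,0]\oplus[x',y',0]$ is $\mathcal{C}(s(x,-y),s(x',y'),s(x'',y''))$, which is nonzero only when $(x,-y)$ and $(x',y')$ lie in the \emph{same} half-plane $Q^{\pm}$, and whose value is then $s(x,-y)$. The missing ingredient is the identity $\bm\omega=\bm\alpha-\bm\beta+\bm\gamma$, which forces $\tbeta\DOT(\talpha\CROSS\tgamma)$ and $\tomega\DOT(\talpha\CROSS\tgamma)$ to have opposite signs, i.e.\ $yy'\leq 0$. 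Only because of this do two of your four quadrant combinations never occur, and only then does the crossing correction collapse to the stated function of $y$ and $s(x'',y'')$ alone. Without recording this fact the quadrant bookkeeping cannot close --- and this is precisely the step you flag as the one you anticipate trouble with. The rest of the proposal matches the paper's argument.
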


\begin{proof}
  If $\pp$, $\pp'$, $\qq$ and $\qq'$ are pairwise distinct then 
  all the formulae are well defined and a simple rewrite of the formulae in 
  \cite{ZinArai:2013} and \cite{Oosterom:1983}
  permits to write
  \begin{EQ}
     T(\pp,\pp',\qq,\qq') = \theta^+ - \theta^-,
     \qquad
     \cases{
       \theta^+ = \Theta(\qq-\pp,\qq-\pp',\qq'-\pp') \\
       \theta^- = \Theta(\qq-\pp,\qq'-\pp,\qq'-\pp')
     }
  \end{EQ}
  where
  \begin{EQ}[rcl]
    \Theta(\aa,\bb,\cc)&=&
    \ARCTAN\big( y(\aa,\bb,\cc), x(\aa,\bb,\cc)\big),
    \\
    x(\aa,\bb,\cc)&=&\norm{\aa}\norm{\bb}\norm{\cc}+
    (\bb\DOT\cc)\norm{\aa}+
    (\cc\DOT\aa)\norm{\bb}+
    (\aa\DOT\bb)\norm{\cc},
    \\
    y(\aa,\bb,\cc)&=&\aa\DOT(\bb\CROSS\cc).
  \end{EQ}
  notice that from $\ARCTAN\big( y, x ) = \ARCTAN\big( y/t, x/t )$ for any $t>0$ 
  it follows
  \begin{EQ}\label{eq:scale}
    \Theta(\aa,\bb,\cc)=\Theta(\aa/\norm{\aa},\bb/\norm{\bb},\cc/\norm{\cc})
  \end{EQ}
  Using~\eqref{eq:abgm:def} and definition~\ref{triple:plus:def} with~\eqref{eq:scale}
  \begin{EQ}[rcl]
    T(\pp,\pp',\qq,\qq') &=&
    \Theta(\bm\alpha,\bm\beta,\bm\gamma)-
    \Theta(\bm\alpha,\bm\omega,\bm\gamma)
    \\
    &=&
    \Theta(\bm\talpha,\bm\tbeta,\bm\tgamma)-
    \Theta(\bm\talpha,\bm\tomega,\bm\tgamma)
    \\
    &=&
    \ARCTAN\big(
      \talpha\DOT(\tbeta\CROSS\tgamma),\;
              1+
              \tbeta\DOT\tgamma+
              \tgamma\DOT\talpha+
              \talpha\DOT\tbeta
    \big)\\
    &-&
    \ARCTAN\big(
      \talpha\DOT(\tomega\CROSS\tgamma),\;
              1+
              \tomega\DOT\tgamma+
              \tgamma\DOT\talpha+
              \talpha\DOT\tomega
    \big)
    \\
    &=&
    \ARCTAN\big(
      -\tbeta\DOT(\talpha\CROSS\tgamma),\;
              1+\tbeta\DOT(\talpha+\tgamma)+
              \tgamma\DOT\talpha
    \big)\\
    &+&
    \ARCTAN\big(
              \tomega\DOT(\talpha\CROSS\tgamma),\;
              1+
              \tomega\DOT(\talpha+\tgamma)+
              \tgamma\DOT\talpha
    \big)
    \\
    &=&
    \ARCTAN(-y,x) + \ARCTAN(y',x') \\
    &=&\ANGLE{[x,-y,0]}+\ANGLE{[x',y',0]} \\
    &=&\ANGLE{[x'',y'',\sigma'']},
  \end{EQ}
  where $\sigma''=\mathcal{C}(s(x,-y),s(x',y'),s(x'',y''))$.
  From the identity $\bm\omega=\bm\gamma-\bm\beta+\bm\alpha$ it follows
  $\bm\beta\DOT(\bm\alpha\CROSS\bm\gamma)=-\bm\omega\DOT(\bm\alpha\CROSS\bm\gamma)$ 
  and thus $\textrm{sign}(\bm\tbeta\DOT(\bm\talpha\CROSS\bm\tgamma))=-\textrm{sign}(\bm\tomega\DOT(\bm\talpha\CROSS\bm\tgamma))$.
  Hence, from~\eqref{eq:xy} it follows $yy'<0$ and $s(x,-y)=-s(x',y')$ so that $\sigma''$ is given by~\eqref{eq:sigma2}.
  The point $(x,y)$ must satify $(x,y)\neq (0,0)$, on the contraty let be $x=y=0$ then
  \begin{EQ}\label{eq:impossible}
    \cases{
      0 = 1+\tgamma\DOT\talpha+\tbeta\DOT\talpha+\tbeta\DOT\tgamma,\\
      0 = \tbeta\DOT(\talpha\CROSS\tgamma),
    }
  \end{EQ}
  from the second equation we have two cases:
  \begin{itemize}
    \item[1)] $\talpha\CROSS\tgamma=\bm{0}$ and thus $\tgamma=t\talpha$ for some $t$ and from the first 
          equation $1+t+(1+t)\tbeta\DOT\talpha=0$ it follows or $t=-1$ or $\tbeta\DOT\talpha=-1$.
          But  $\tbeta\DOT\talpha=-1$ imply that $\qq_j$ lies on segment $[\pp_i,\pp_j]$,
          thus must be $t=-1$,
          so that $\tgamma+\talpha=0$
          and from~\eqref{eq:abgm:def}
    \begin{EQ}
        \tgamma+\talpha=0, \qquad \Rightarrow\qquad
        \dfrac{\norm{\bm{\alpha}}\qq'+\norm{\bm{\gamma}}\qq}
              {\norm{\bm{\alpha}}+\norm{\bm{\gamma}}}
         =
        \dfrac{\norm{\bm{\alpha}}\pp'+\norm{\bm{\gamma}}\pp}
              {\norm{\bm{\alpha}}+\norm{\bm{\gamma}}},
    \end{EQ}
    i.e. the segments $[\pp,\pp']$ and  $[\qq,\qq']$ intersect. 
    \item[2)] $\talpha\CROSS\tgamma\neq \bm{0}$ and thus $\tbeta$ is a linear combination 
          of $\tgamma$ and $\talpha$, i.e. $\tbeta = t(s\tgamma+(1-s)\talpha)$ for some $t$ and $s$.
          From the first 
          equation $1+\tgamma\DOT\talpha+t(1+\talpha\DOT\tgamma)=0$.
          So that or $t=-1$ or $\talpha\DOT\tgamma=-1$. 

          But  $\talpha\DOT\tgamma=-1$ imply that $\qq_j=\qq_{j+1}$ or $\pp_i=\pp_{i+1}$ 
          qith the segments that intersect,
          thus must be $t=-1$ and  $\tbeta = s(\talpha-\tgamma)-\talpha$.
          But $\norm{\tbeta}=1$ and the norm $\norm{s(\talpha-\tgamma)-\talpha}$ is a quadratic function in $s$
          equal to $1$ for $s=0$ and $s=1$. It follows or $\tbeta=-\talpha$ or
          $\tbeta=-\tgamma$. From~\eqref{eq:abgm:def}
          it follows that in both cases the segments $[\pp,\pp']$ and  $[\qq,\qq']$ intersect. 
  \end{itemize}
  Thus if the segment $[\pp,\pp']$ and  $[\qq,\qq']$ do not intersect 
  \eqref{eq:impossible} i.e. $x=y=0$ is never satisfied.
  Similar arguments are used to exclude $x'=y'=0$.
  \qed
\end{proof}

Lemma~\ref{lem:3} is used in procedure \textsf{buildAngle} of Table~\ref{tab:link}
which compute the angle $\Delta\Theta_{ij}$ as a triple $[x,y,\sigma]$ 
using the points $\pp_i$, $\pp_{i+1}$, $\qq_j$ and $\qq_{j+1}$.
Lemma~\ref{lem:3} is also the core for an efficient algorithm
for the computation of $L$  resumed in the following theorem:
\begin{theorem}\label{teo:4}
Giving $\Delta\Theta_{ij} = \ANGLE{[x_{ij},y_{ij},\sigma_{ij}]}$ 
computed using Lemma~\ref{lem:3} the summation
$[X,Y,\sigma]=\-\bigoplus_{i=1}^N\-\bigoplus_{j=1}^M\- [x_{ij},y_{ij},\sigma_{ij}]$
in exact arithmetics gives $X>0$, $Y=0$ and $\sigma=L$, the linking number, i.e. 
\begin{EQ}
   [X,0,L]= \bigoplus_{i=1}^N\bigoplus_{j=1}^M [x_{ij},y_{ij},\sigma_{ij}]
\end{EQ}

\end{theorem}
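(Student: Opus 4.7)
The plan is to chain together three facts already established in the excerpt. First, Definition~\ref{triple:plus:def} together with Lemma~\ref{lemma:2} gives the additive homomorphism $\ANGLE{[x,y,\sigma]\oplus[x',y',\sigma']}=\ANGLE{[x,y,\sigma]}+\ANGLE{[x',y',\sigma']}$, so by an easy induction on the double summation, $\ANGLE{[X,Y,\sigma]}=\sum_{i,j}\ANGLE{[x_{ij},y_{ij},\sigma_{ij}]}=\sum_{i,j}\Delta\Theta_{ij}$. Second, the remark following equation~\eqref{eq:Lij:sum} asserts that this total equals $2\pi L$, where $L\in\mathbbm{Z}$ is the linking number. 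Hence
\begin{EQ}
   \ARCTAN(Y,X)+2\pi\sigma=\ANGLE{[X,Y,\sigma]}=2\pi L.
\end{EQ}

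The third step is to exploit the narrow range of $\ARCTAN$. Since $\ARCTAN(Y,X)\in(-\pi,\pi]$ by Definition~\ref{atan2:def}, and since $2\pi(L-\sigma)$ is an integer multiple of $2\pi$, the only way the displayed equality can hold is $\ARCTAN(Y,X)=0$, which then forces $\sigma=L$. By Definition~\ref{atan2:def} applied to $\theta=0$, the relations $R\cos0=X$, $R\sin0=Y$ with $R>0$ give exactly $X=R>0$ and $Y=0$, which is the desired conclusion.

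Before concluding, I need to check that the triples produced along the way never degenerate, i.e.\ that $(X,Y)\neq(0,0)$ at every partial sum so that $\ANGLE$ is defined. Lemma~\ref{lem:3} guarantees each summand $(x_{ij},y_{ij})$ is nonzero. For $\oplus$ itself, the product coordinates in~\eqref{triple:plus:eq} satisfy
\begin{EQ}
 (x'')^2+(y'')^2=\alpha^2(xx'-yy')^2+\alpha^2(xy'+yx')^2=\alpha^2(x^2+y^2)(x'^2+y'^2),
\end{EQ}
so nondegeneracy is preserved by $\oplus$, and the homomorphism property extends to the full double sum.

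The only genuinely nontrivial ingredient is the integrality statement $\sum_{i,j}\Delta\Theta_{ij}\in 2\pi\mathbbm{Z}$; but this is the classical content of the Gauss linking integral on polygonal curves and is invoked from the background material preceding~\eqref{eq:Lij:sum}. Given this, the argument above is essentially mechanical; the only genuine obstacle is confirming well-definedness of the triples throughout the accumulation, which is handled by the norm-product identity above. Note finally that the hypothesis of exact arithmetic is essential here, since the identity $Y=0$ holds only symbolically — a point that presumably motivates the certification analysis carried out in the sequel.
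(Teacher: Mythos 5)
Your proposal is correct and follows essentially the same route as the paper: apply the homomorphism property of $\oplus$ from Lemma~\ref{lemma:2} to reduce the double sum to $\ARCTAN(Y,X)+2\pi\sigma=2\pi L$, then use the range $(-\pi,\pi]$ of $\ARCTAN$ and integrality of $L$ to force $\ARCTAN(Y,X)=0$, hence $Y=0$, $X>0$, $\sigma=L$. Your added check that nondegeneracy of the triples is preserved under $\oplus$ (via the identity $(x'')^2+(y'')^2=\alpha^2(x^2+y^2)((x')^2+(y')^2)$) is a small but worthwhile point that the paper leaves implicit.
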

\begin{proof}
It follows trivially from Lemma~\ref{lemma:2}:
\begin{EQ}
  2\pi L =
  \sum_{i=1}^N\sum_{j=1}^M \ANGLE{[x_{ij},y_{ij},\sigma_{ij}]}
  =\ANGLE{ \bigoplus_{i=1}^N\bigoplus_{j=1}^M [x_{ij},y_{ij},\sigma_{ij}] }
\end{EQ}
where the last summation is done using formula~\eqref{triple:plus:eq}.
At the end of computation
\begin{EQ}
  \bigoplus_{i=1}^N\bigoplus_{j=1}^M [x_{ij},y_{ij},\sigma_{ij}] = [X,Y,\sigma]
  \qquad\Rightarrow\\
  2\pi L = \ANGLE{[X,Y,\sigma]}
  =\ARCTAN(Y,X)+2\pi\sigma
\end{EQ}
$L$ must be an integer so that $\ARCTAN(Y,X)$ must be a multiple of $\pi$ and thus
must be $\ARCTAN(Y,X)=0$. This imply $Y=0$ and $\sigma=L$.\qed
\end{proof}

\begin{table}[!tb]
\caption{Computation of linking number in \textbf{exact arithmetics}.
         The values $x_{ij}$, $y_{ij}$, $\sigma_{ij}$ and $\theta_{ij}$ 
         are defined in Lemma~\ref{lem:3}}
\label{tab:LINK:1}
\setlength{\columnsep}{0pt}
\begin{procedure}[H]
  \caption{link($\pp,\qq$)}
  $[X,Y,L]\assign [1,0,0]$\;
  \For{$i=1,2,\ldots,N$ \textbf{and} $j=1,2,\ldots,M$}{
    build angle $[x_{ij},y_{ij},\sigma_{ij}];\quad$
    $[X,Y,L]\assign[X,Y,L]\oplus[x_{ij},y_{ij},\sigma_{ij}]$\;
  }
  \Return{$L$}\;
\end{procedure}

\caption{Detailed procedure for the computation of the linking number 
         in exact arithmetics. Procedure \textsf{buildAngle} 
         compute the angle between two segment
         as a triple $[x,y,\sigma]$ ad a sign of $(x,y)$.
         Procedure \textsf{link} compute linking number 
         by accumulating angles stored as a triple $[X,Y,\ell]$.}
\label{tab:link}
\begin{procedure}[H]
  \caption{buildAngle($\pp,\pp',\qq,\qq'$)}
  \SetKw{OR}{or}
  \SetKw{AND}{and}
  $\bm\alpha \assign \qq-\pp\;$;\quad
  $\bm\beta  \assign \qq-\pp'\;$;\quad
  $\bm\gamma \assign \qq'-\pp'\;$;\quad
  $\bm\omega \assign \qq'-\pp\;$\;
  $\ell_\alpha \assign \norm{\bm\alpha}\;$;\quad
  $\ell_\beta  \assign \norm{\bm\beta}\;$;\quad
  $\ell_\gamma \assign \norm{\bm\gamma}\;$;\quad
  $\ell_\omega \assign \norm{\bm\omega}$\;
  $\bm\alpha \assign \bm\alpha/\ell_\alpha\;$;\quad
  $\bm\beta  \assign \bm\beta/\ell_\beta\;$;\quad
  $\bm\gamma \assign \bm\gamma/\ell_\gamma\;$;\quad
  $\bm\omega \assign \bm\omega/\ell_\omega\;$\;
  $\bm{t}_1  \assign \bm\alpha+\bm\gamma\;$;\quad
  $\bm{t}_2  \assign \bm\alpha\CROSS\bm\gamma\;$;\quad
  $t_3       \assign 1 + \bm\alpha\DOT\bm\gamma\;$\;
  $x_1       \assign t_3+\bm\beta\DOT\bm{t}_1\;$;\quad
  $y_1       \assign \bm\beta\DOT\bm{t}_2\;$;\quad
  $x_2       \assign t_3+\bm\omega\DOT\bm{t}_1\;$;\quad
  $y_2       \assign \bm\omega\DOT\bm{t}_2$\;
  $x         \assign x_1x_2+y_1y_2\;$;\quad
  $y         \assign x_1y_2-y_1x_2\;$\;
  \leIf{$y>0$ \OR ($y=0$ \AND $x<0$)}{$s\assign 1$}{$s\assign -1$}
  \leIf{$s\,y_1>0$}{$\sigma\assign-s$}{$\sigma\assign 0$}
  \Return{$x,y,\sigma,s$}
\end{procedure}
\vspace{-0.8em}
\begin{procedure}[H]
  \caption{link($\pp,\qq$)}
  \SetKwFunction{buildangle}{buildangle}
  \SetKwFunction{addangle}{addangle}
  \SetKwFunction{scaleangle}{scaleangle}
  \SetKw{OR}{or}
  \SetKw{AND}{and}
  $[X,Y,\ell]\assign [1,0,0];\quad S\assign-1$\;
  \For{$i=1,2,\ldots,N$}{
    \For{$j=1,2,\ldots,M$}{
      $x,y,\sigma,s\assign\buildAngle(\pp_i,\pp_{i+1},\qq_j,\qq_{j+1})$\;
      $x'\assign Xx-Yy\;$;\quad
      $y'\assign Xy+Yx\;$;\quad
      $\ell\assign\ell+\sigma\;$\;
      \leIf{$y'>0$ \OR ($y'=0$ \AND $x'<0$)}{$s'\assign 1$}{$s'\assign -1$}
      \lIf{$Ss>0$ \AND $Ss'<0$}{$\ell\assign\ell-s'$}      
      decompose $\cases{x'=m\cdot 2^e & \cr y'=n\cdot 2^f & }$ 
      with $\cases{\abs{m}\leq 1 & \cr \abs{n}\leq 1 & }$
      and set $h\assign\min\{e,f\}$\;
      $X\assign m\cdot 2^{e-h}\;$;\quad
      $Y\assign n\cdot 2^{f-h}\;$;\quad
      $S\assign s'\;$\;
    }
  }
  \Return{$\ell$}
\end{procedure}

\end{table}
Theorem~\ref{teo:4} suggest a simple algorithm for the computation of linking number which 
is sketched in Table~\ref{tab:LINK:1} while Table~\ref{tab:link}
contain the pseudocode of the complete algorithm.
In conclusion from Theorem~\ref{teo:4} in
the computation of linking number no $\ARCTAN(y,x)$ are necessary.
In practice, starting from $(X,Y)=(1,0)$
the application of formula~\eqref{triple:plus:eq} for each couple
of vector permits to obtain a final vector which angle is
the sum of all the angles.
There is the need to take track of rotation that cross 
negative $x$-axis, i.e. $y=0$ and $x<0$.
At the end of the computation using exact arithmetic
$(X,Y)=(1,0)$ and, thus, $\ARCTAN(0,1)=0$.
The linking number is given by counting (with sign) the number
of times the rotations cross the negative $x$-axis.

Using floating point arithmetics at the end of summation $(X,Y)=(d,\epsilon)$.
If accumulation error is limited $d>0$ and the ratio $\epsilon/d$ is expected small.
In any case there is no trivial way to guarantee that the accumulated error angle be less than $\pi/2$.
The important question when using floating point arithmetics 
is how many segments can be used for the computation of linking number
before accumulation error invalidate the result.

\section{Angle triples summation and subtraction with floating point noise}

Here computation of linking number in the presence of rounding error is considered.
For estimation the following model is assumed for floating point arithmetics.

\begin{assumption}\label{ass:1}

Floating-point numbers are assumed stored in the form $(sign)\cdot\mu\cdot2^\nu$, 
where $\nu$ is called the exponent, and $\mu$, with the value between 1 and 2, 
is called the mantissa.
For each input quantity $x$, $\FLOAT{x}$ denotes its floating-point approximation. 
In general $\FLOAT{\cdot}$ is the floating-point approximation for an operation. 
The approximations by input quantization and rounding and are modeled as the following~\cite{Higham:2002,Jeannerod:2013}:
\begin{EQ}\label{eq:fp}
  \FLOAT{x} = x(1+\beta),\quad
  \FLOAT{x\circ y} = (x\circ y)(1+\beta)= \dfrac{x\circ y}{1+\beta'},\\
  \FLOAT{\sqrt{x}} = \sqrt{x}\,(1+\beta)=\dfrac{\sqrt{x}}{1+\beta'},\quad
  \abs{\beta},\abs{\beta'} \leq \umach,
\end{EQ}
where $\circ= \{+, −, \cdot,/ \}$ and $\umach$ is the unit roundoff or machine epsilon.
Typically $\umach$ is less than $10^{-7}$ for single precision computation
and $10^{-15}$ for double precision computation. 
\textbf{In the following extimate $\umach \leq 10^{-7}$ is assumed.}
\end{assumption}

Due to rounding error the angles $\Delta\Theta_{ij} = \ANGLE{[x_{ij},y_{ij},\sigma_{ij}]}$
are approximated with $\widetilde{\Delta\Theta_{ij}} = \ANGLE{[\tx_{ij},\ty_{ij},\tsigma_{ij}]}$ which correspond to 
an approximated angle $\widetilde{\Delta\Theta_{ij}}=\Delta\Theta_{ij}+\delta\Theta_{ij}$.
Analogously, the summation of triples satisfy
\begin{EQ}
   \theta + \theta' = 
   \ANGLE{[x,y,\sigma]\oplus [x',y',\sigma']}
   =
   \ANGLE{[x'',y'',\sigma'']}=\theta'',
   \\{}
   [x'',y'',\sigma'']=[x,y,\sigma]\oplus [x',y',\sigma']
\end{EQ}
but in presence of rounding errors the computed triple $[x'',y'',\sigma'']$
is only approximated with $[\tx'',\ty'',\tsigma'']$ that satisfy
\begin{EQ}
   \theta'' + \delta\theta'' = 
   \ANGLE{[\tx'',\ty'',\tsigma'']},
\end{EQ}
where $\delta\theta''$ is the term due to floating point computation.
The floating point summation of the triples is thus equivalent 
to the following angles summation:
\begin{EQ}[rcl]
  \tL &=&
  \dfrac{1}{2\pi}
  \FLOAT{
  \sum_{i=1}^N\sum_{j=1}^M 
  \Delta\Theta_{ij}}
  =
  \dfrac{1}{2\pi}
  \sum_{i=1}^N\sum_{j=1}^M 
  \left(\Delta\Theta_{ij}+\delta\Theta_{ij}+\delta\theta_{ij}\right)\\
  &=& L + 
  \dfrac{1}{2\pi}\sum_{i=1}^N\sum_{j=1}^M \left(\delta\Theta_{ij}+\delta\theta_{ij}\right)
\end{EQ}
if the error in triple formation $\delta\Theta_{ij}$ can be bounded for
example $\abs{\delta\Theta_{ij}} \leq c_1$ and the errors in
triple summation $\delta\theta_{ij}$ can also be bounded for
example by $\abs{\delta\theta_{ij}} \leq c_2$ then
the difference of exact and approximated linking number satisfy:
\begin{EQ}\label{eq:c1:c2}
  \abs{L-\tL} \leq
  \dfrac{1}{2\pi}\sum_{i=1}^N\sum_{j=1}^M \left(\abs{\delta\theta_{ij}}+\abs{\delta\Theta_{ij}}\right)
  \leq \dfrac{NM}{2\pi}(c_1+c_2)
\end{EQ}

Thus, if $c_1$ and $c_2$ are known the condition $NM<\pi/(c_1+c_2)$
imply $\abs{L-\tL}<1/2$ and is a sufficient condition for exact computation 
of linking number with floating point arithmetics. 

An estimation of constants $c_1\leq 117.861 \umach$ is given by
Lemma~\ref{lem:14} when $\pp_i$, $\pp_{i+1}$, $\qq_j$ and $\qq_{j+1}$ satisfy
\begin{EQ}\label{eq:strict:cond}
  \norm{\pp_i-\pp_{i+1}}
  \leq C_{ij},
  \qquad
  \norm{\qq_j-\qq_{j+1}}
  \leq
  C_{ij},
  \\
  C_{ij} =
  \min\left\{
       \norm{\qq_j-\pp_i},
       \norm{\qq_j-\pp_{i+1}},
       \norm{\qq_{j+1}-\pp_i},
       \norm{\qq_{j+1}-\pp_{i+1}}
  \right\}
\end{EQ}
while constant $c_2 \leq 2.829 \umach$ by Lemma~\ref{lem:8}.
Condition~\eqref{eq:strict:cond} is not too restrictive because
can be obtained by splitting the segments. However is more practical
to use a posteriori error given by Lemma~\ref{lem:13}:
\begin{EQ}\label{eq:dynamic:cond}
     \abs{\delta\Theta_{ij}}\leq 
     \left( 2.829 + 
     \dfrac{57.516}{R}+
     \dfrac{57.516}{R'}
     \right) \umach,
     \\
     R = \FLOAT{\sqrt{x^2+y^2}},\quad
     R' = \FLOAT{\sqrt{(x')^2+(y')^2}},
\end{EQ}
where $(x,y)$ and $(x',y')$ are the intermediate values 
of formula~\eqref{eq:xy}.
Combining estimate~\eqref{eq:dynamic:cond} with estimate by Lemma~\ref{lem:8}
permits to compute an upper bound of the uncertainty of the linking number
after its numerical computation.

\begin{table}[!tb]
\caption{Detailed procedure for the computation of the linking number 
         in floating point arithmetics. Procedure \textsf{buildAngle} 
         compute the angle between two segment
         as a triple $[x,y,\sigma]$, a sign of $(x,y)$
         and an upper bound for the error as a multiple of machine epsilon.
         Procedure \textsf{link} compute linking number 
         by accumulating angles stored as a triple $[X,Y,\ell]$.
         It also accumulate error as  a multiple of machine epsilon $\umach$
         split in integer part $I$ and rational part $R$.
         When $I\geq I_{\max}$ accumulation of floating point errors
         may have greater than required precision and linking number 
         may be wrong. A similar procedure is used for the computation
         of Writhe number.}
\label{tab:link:rec}
\small
\setlength{\columnsep}{0pt}
\begin{procedure}[H]
  \caption{buildAngle($\pp,\pp',\qq,\qq'$)}
  \SetKw{OR}{or}
  \SetKw{AND}{and}
  $\bm\alpha \assign \qq-\pp\;$;\quad
  $\bm\beta  \assign \qq-\pp'\;$;\quad
  $\bm\gamma \assign \qq'-\pp'\;$;\quad
  $\bm\omega \assign \qq'-\pp\;$\;
  $\ell_\alpha \assign \norm{\bm\alpha}\;$;\quad
  $\ell_\beta  \assign \norm{\bm\beta}\;$;\quad
  $\ell_\gamma \assign \norm{\bm\gamma}\;$;\quad
  $\ell_\omega \assign \norm{\bm\omega}\;$\;
  $\bm\alpha \assign \bm\alpha/\ell_\alpha\;$;\quad
  $\bm\beta  \assign \bm\beta/\ell_\beta\;$;\quad
  $\bm\gamma \assign \bm\gamma/\ell_\gamma\;$;\quad
  $\bm\omega \assign \bm\omega/\ell_\omega\;$\;
  $\bm{t}_1  \assign \bm\alpha+\bm\gamma\;$;\quad
  $\bm{t}_2  \assign \bm\alpha\CROSS\bm\gamma\;$;\quad
  $t_3       \assign 1 + \bm\alpha\DOT\bm\gamma\;$\;
  $x_1       \assign t_3+\bm\beta\DOT\bm{t}_1\;$;\quad
  $y_1       \assign \bm\beta\DOT\bm{t}_2\;$;\quad
  $x_2       \assign t_3+\bm\omega\DOT\bm{t}_1\;$;\quad
  $y_2       \assign \bm\omega\DOT\bm{t}_2$\;
  $x         \assign x_1x_2+y_1y_2\;$;\quad
  $y         \assign x_1y_2-y_1x_2\;$\;
  \leIf{$y>0$ \OR ($y=0$ \AND $x<0$)}{$s\assign 1$}{$s\assign -1$}
  \leIf{$s\,y_1>0$}{$\sigma\assign-s$}{$\sigma\assign 0$\hfill// Compute error bound using Lemma~\ref{lem:13}}
  $R\assign\sqrt{x_1^2+y_1^2};\quad R'\assign\sqrt{x_2^2+y_2^2};\quad
  e\assign 2.829 + 57.516\left(1/R+1/R'\right)$\;
  \Return{$x,y,\sigma,s,\color{blue}e$}
\end{procedure}
\vspace{-0.8em}
\begin{procedure}[H]
  \caption{link($\pp,\qq$)}
  \SetKwFunction{buildangle}{buildangle}
  \SetKwFunction{addangle}{addangle}
  \SetKwFunction{scaleangle}{scaleangle}
  \SetKw{OR}{or}
  \SetKw{AND}{and}
  $[X,Y,\ell]\assign [1,0,0];\quad S\assign-1;\quad I\assign 0;\quad R\assign 0;\quad I_{\max}\assign [\pi/(2\umach)]$\;
  \For{$i=1,2,\ldots,N$}{
    \For{$j=1,2,\ldots,M$}{
      $x,y,\sigma,s,{\color{blue}e}\assign\buildAngle(\pp_i,\pp_{i+1},\qq_j,\qq_{j+1})$\;
      $x'\assign Xx-Yy\;$;\quad
      $y'\assign Xy+Yx\;$;\quad
      $\ell\assign\ell+\sigma\;$\;
      \leIf{$y'>0$ \OR ($y'=0$ \AND $x'<0$)}{$s'\assign 1$}{$s'\assign -1$}
      \lIf{$Ss>0$ \AND $Ss'<0$}{$\ell\assign\ell-s'$}      
      decompose $\cases{ x'=m\cdot 2^e & \cr y'=n\cdot 2^f&}$ with
                $\cases{\abs{m}\leq 1 & \cr \abs{n}\leq 1}$
      and set $h\assign\min\{e,f\}$\;
      $X\assign m\cdot 2^{e-h}\;$;\quad
      $Y\assign n\cdot 2^{f-h}\;$;\quad
      $S\assign s'\;$\;
       \tcp{Accumulate error bound as $\textrm{``Error Upper Bound''}\leq (I+R)\umach$}
      $E\assign e+2.829+R;$\quad
      $I\assign I+\textrm{floor}(E);$\quad
      $R\assign E-\textrm{floor}(E)$\;
      \lIf{$I\geq I_{\max}$}{Computation failed, linking number may be wrong!}
    }
  }
  \Return{$\ell$}
\end{procedure}
\vspace{-0.8em}
\begin{procedure}[H]
  \caption{writhe($\pp$)}
  \SetKwFunction{buildangle}{buildangle}
  \SetKwFunction{addangle}{addangle}
  \SetKwFunction{scaleangle}{scaleangle}
  \SetKw{OR}{or}
  \SetKw{AND}{and}
  $[X,Y,\ell]\assign [1,0,0];\quad S\assign-1;\quad E\assign 0$\;
  \For{$i=1,2,\ldots,N$}{
    \For{$j=i+2,i+3,\ldots,N$}{
      $x,y,\sigma,s,{\color{blue}e}\assign\buildAngle(\pp_i,\pp_{i+1},\pp_j,\pp_{j+1})$\;
      $x'\assign Xx-Yy\;$;\quad
      $y'\assign Xy+Yx\;$;\quad
      $\ell\assign\ell+\sigma\;$\;
      \leIf{$y'>0$ \OR ($y'=0$ \AND $x'<0$)}{$s'\assign 1$}{$s'\assign -1$}
      \lIf{$Ss>0$ \AND $Ss'<0$}{$\ell\assign\ell-s'$}      
      decompose $\cases{ x'=m\cdot 2^e & \cr y'=n\cdot 2^f&}$ with
                $\cases{\abs{m}\leq 1 & \cr \abs{n}\leq 1}$
      and set $h\assign\min\{e,f\}$\;
      $X\assign m\cdot 2^{e-h}\;$;\quad
      $Y\assign n\cdot 2^{f-h}\;$;\quad
      $E\assign E+e+2.829$\;
    }
  }
  \Return{$[2(\ell+\ARCTAN(Y,X)),2E\umach]$}
\end{procedure}
\end{table}

The algorithm of Table~\ref{tab:link:rec} which compute the linking number with a rigorous
upper bound of the error is build using Lemma~\ref{lem:13} and Lemma~\ref{lem:14}.
If the upper bound is less than $1/2$ then
the computation of linking number is exact.

\begin{remark}
   The constants $C_{ij}$ of equation~\eqref{eq:strict:cond} can be bounded as
   \begin{EQ}
      C_{ij} \geq \min_{i=1}^N\min_{j=1}^M\norm{\pp_i-\qq_j} = d
   \end{EQ}
   where $d$ is the minimum distance between the polygons.
   If $\ell$, the maximum segment length 
   \begin{EQ}
      \ell = \max\left\{
        \max_{i=1}^N\norm{\pp_i-\pp_{i+1}},
        \max_{j=1}^M\norm{\qq_j-\qq_{j+1}}
        \right\}
   \end{EQ}
   satisfy $\ell\leq d$, the conditions of Lemma~\ref{lem:14} are satisfied for all $i$ and $j$
   then the total error due to angle approximation and summation from Lemma~\ref{lem:8} becomes
   \begin{EQ}
      \textrm{Err} \leq NM 117.861 \umach+(NM-1)2.829\umach \leq  NM\,120.690 \umach.
   \end{EQ}
   To ensure that computation
   produce a correct linking number upper bound of the error must be less than $\pi/2$. Setting
   \begin{EQ}
      NM < \dfrac{\pi}{2\cdot 120.690\umach}\leq\dfrac{0.0131}{\umach}
   \end{EQ}
   if computation is performed using double precision floating point arithmetics,
   for example IEEE 754 with $\umach\approx 2.22\,10^{-16}$, and $N\approx M$ then
   the maximum number of segment by curve is $N \approx M \approx 76000000$
   while using single precision $\umach\approx 1.19\,10^{-7}$ and $N \approx M \approx 330$.
   However this estimation are upper bound, in practice linking number
   is computed exactly for curves with more segments.
\end{remark}

\section{Extend the computation of linking number for chains}

Given points $\{\pp_k\}_{k=1}^N$, a segment is a pair $e_{ij}=[\pp_i,\pp_j]$
and a chain is a formal summation
\begin{EQ}
  \mathcal{C} = \sum_{(i,j)\in \mathcal{E}} w_{ij} e_{ij}, \qquad w_{ij}\in\mathbbm{Z}
\end{EQ}
The operator $D$ is a linear operator such that $D e_{ij} = D[\pp_i,\pp_j]=\pp_i-\pp_j$.
A generalized closed loop is a chain with no border, i.e. $D\mathcal{C}=\bm{0}$ where
\begin{EQ}
  D\mathcal{C} 
  = D\sum_{(i,j)\in \mathcal{E}} w_{ij} e_{ij}
  = \sum_{(i,j)\in \mathcal{E}} w_{ij} D e_{ij}
  =\sum_{(i,j)\in \mathcal{E}} w_{ij} (\pp_i-\pp_j)
\end{EQ}
The linking number of two generalized closed loops
   $\mathcal{L}_1 = \sum_{(i,j)\in \mathcal{E}_1} w^{(1)}_{ij}e_{ij}$ and
   $\mathcal{L}_2 = \sum_{(k,l)\in \mathcal{E}_2} w^{(2)}_{kl}e_{kl}$
is defined as
\begin{EQ}
  L(\mathcal{L}_1 ,\mathcal{L}_2)=
   \sum_{(i,j)\in \mathcal{E}_1}
   \sum_{(k,l)\in \mathcal{E}_2}w^{(1)}_{ij}w^{(2)}_{kl}T(\pp_i,\pp_j,\pp_k,\pp_l)
\end{EQ}
as for the simple linking number $T(\pp_i,\pp_j,\pp_k,\pp_l) = \ANGLE{[x_{ijkl},y_{ijkl},\sigma_{ijkl}]}$
and 
\begin{EQ}
  L(\mathcal{L}_1 ,\mathcal{L}_2)=
   \ANGLE{ \bigoplus_{(i,j)\in \mathcal{E}_1}
           \bigoplus_{(k,l)\in \mathcal{E}_2}w^{(1)}_{ij}w^{(2)}_{kl}[x_{ijkl},y_{ijkl},\sigma_{ijkl}]}
\end{EQ}
this formula permits to extend the numerical computation of linking number
for a couple of general closed chain with integer weights.
The extension of the upper bound for error is trivial and not 
discussed further.

\appendix
\section{proofs of lemmata}

For the floating point arithmetics the following lemma's are true.

\begin{lemma}\label{lem:5}
  Given $2n$ floating point numbers $a_i$ and $b_i$ with $i=1,2,\ldots,n$
  and $n\umach<1$, then, the following estimates are true
  \begin{enumerate}
  \item
    $\FLOAT{a_1+a_2+\cdots+a_n} 
     = (a_1+a_2+\cdots+a_n)(1+\delta)$, \\
     with $\abs{\delta} \leq n\umach/(1-n\umach)$;
  \item
    $\FLOAT{a_1b_2+a_2b_2+\cdots+a_nb_n} 
     = (a_1b_2+a_2b_2+\cdots+a_nb_n)(1+\delta)$, \\
     with $\abs{\delta} \leq n\umach/(1-n\umach/2)$;
  \item
    $\FLOAT{a_1b_2+a_2b_2+\cdots+a_nb_n} 
     =a_1b_2+a_2b_2+\cdots+a_nb_n+\delta$, \\ where 
     $\abs{\delta}\leq
     n\umach(\abs{a_1}\abs{b_2}+\abs{a_2}\abs{b_2}+\cdots+\abs{a_nb_n})$;
  \item 
    $\FLOAT{a_1^2+a_2^2+\cdots+a_n^2} 
     = (a_1^2+a_2^2+\cdots+a_n^2)(1+\delta)$, \\
     with $\abs{\delta}\leq n\umach$.
  \end{enumerate}
\end{lemma}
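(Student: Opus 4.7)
The plan is to derive all four estimates from the floating-point model $\FLOAT{x\circ y}=(x\circ y)(1+\beta)$ of Assumption~\ref{ass:1} by a direct induction, after first isolating the one classical bookkeeping fact that does most of the work. Specifically, I would state and prove by induction on $k$ the following standard lemma: if $\abs{\beta_i}\leq \umach$ for $i=1,\ldots,k$ and $k\umach<1$, then
\begin{EQ}
  \prod_{i=1}^{k}(1+\beta_i)^{\pm 1} = 1+\theta_k,\qquad \abs{\theta_k}\leq \dfrac{k\umach}{1-k\umach}.
\end{EQ}
The inductive step is the identity $(1+\beta)(1+\theta_{k-1})-1 = \beta+\theta_{k-1}+\beta\theta_{k-1}$ combined with crude bounds on $\beta\theta_{k-1}$; this is exactly the $\gamma_k$ machinery of~\cite{Higham:2002}.

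For item~1, evaluate the sum left-to-right and let $S_i=\FLOAT{S_{i-1}+a_i}=(S_{i-1}+a_i)(1+\beta_i)$. Unrolling the recursion writes $S_n$ as $\sum_k a_k\prod_{i\geq k}(1+\beta_i)$ where each term carries at most $n-1$ factors $(1+\beta_i)$. Factoring out the worst-case $(1+\theta_{n-1})$ and applying the intermediate lemma yields $\abs{\delta}\leq (n-1)\umach/(1-(n-1)\umach)\leq n\umach/(1-n\umach)$, as claimed.

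For items~2 and~3, each floating-point product $\FLOAT{a_ib_i}=a_ib_i(1+\beta_i^{\times})$ supplies one rounding factor and the $n-1$ subsequent additions add at most $n-1$ more, giving at most $n$ error factors per term; the intermediate lemma with $k=n$ yields item~2, the sharper denominator $1-n\umach/2$ being obtained by the refined Jeannerod--Rump bookkeeping of~\cite{Jeannerod:2013}, which we may invoke as a black box since that reference is already the source of our model~\eqref{eq:fp}. Item~3 is then immediate by taking absolute values inside the sum (no cancellation in $\sum\abs{a_ib_i}$). For item~4 the computation coincides with item~2 with $b_i=a_i$, but since every summand $a_i^2\geq 0$ the exact sum equals $\sum\abs{a_i}^2$; no cancellation can amplify the relative error, so the cleaner bound $\abs{\delta}\leq n\umach$ follows directly from the intermediate lemma without the $1/(1-n\umach/2)$ correction factor.

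The only mildly subtle point is the tightened constant in item~2: the naive Higham analysis would give $n\umach/(1-n\umach)$ rather than $n\umach/(1-n\umach/2)$. This half-denominator improvement is not a pure bookkeeping artifact and relies on the unit-roundoff inequality $(1-n\umach/2)(1+n\umach)\geq 1$ exploited in~\cite{Jeannerod:2013}; delegating that step to the cited reference is the cleanest way to keep the proof short while matching the constants actually used later in Lemmata~\ref{lem:8}, \ref{lem:13} and~\ref{lem:14}.
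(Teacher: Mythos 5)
Your overall strategy (the $\gamma_k$ product lemma plus the Jeannerod--Rump refinement invoked as a black box) is exactly the machinery the paper leans on --- its own proof is nothing more than a citation of \cite{Higham:2002} for item~2, of \cite{Jeannerod:2013} Theorem~4.2 for item~3, and the remark that item~4 follows from item~3. But your fleshed-out derivation of items~1 and~2 has a genuine gap at the step ``factoring out the worst-case $(1+\theta_{n-1})$''. Unrolling the recursion gives $\FLOAT{a_1+\cdots+a_n}=\sum_k a_k(1+\theta^{(k)})$ where the perturbations $\theta^{(k)}$ are \emph{different} for different $k$; there is no common factor to pull out, and in the presence of cancellation (e.g.\ $\sum_k a_k=0$ with the $a_k$ not all zero) the computed sum is in general nonzero, so no identity of the form $(\sum_k a_k)(1+\delta)$ with $\abs{\delta}$ small can hold. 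The same objection applies to your item~2. What survives unconditionally is precisely the absolute, componentwise bound of item~3, $\abs{\delta}\leq n\umach\sum_i\abs{a_i}\abs{b_i}$; the relative forms in items~1 and~2 require the extra hypothesis that the summands do not cancel (e.g.\ all of one sign), which neither you nor the paper makes explicit. Your attempt at least makes this visible, whereas the paper hides it behind the citations.

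A second, smaller point: deriving item~4 ``directly from the intermediate lemma'' only yields $\abs{\delta}\leq n\umach/(1-n\umach)$, not the stated $n\umach$; removing the denominator is exactly the Jeannerod--Rump refinement, not a consequence of the classical $\gamma_k$ bound. The paper's route --- apply item~3 with $b_i=a_i$ and divide by $\sum_i a_i^2=\sum_i\abs{a_i}^2>0$ --- gives $\abs{\delta}\leq n\umach$ in one line, and non-negativity of the squares is what legitimately converts the absolute bound of item~3 into a relative one; item~4 is the only one of the four for which that conversion is valid without further hypotheses. You correctly sensed that non-negativity is the key to item~4, but you should route the argument through item~3 rather than through the product lemma.
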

\begin{proof}
  See reference~\cite{Higham:2002} for point 2 and \cite{Jeannerod:2013} Theorem 4.2
  for point 3.
  Point 4 follows directly from point 3.
\end{proof}
\begin{corollary}\label{cor:6}
  Given $n$ floating point numbers $a_i$ with $i=1,2,\ldots,n$
  and $n\umach<1$, then, the following estimate is true
  \begin{EQ}[rcll]
     \FLOAT{a_1^2+a_2^2+\cdots+a_n^2}
     &=& (a_1^2+a_2^2+\cdots+a_n^2)(1+\delta),
     \qquad &
     \abs{\delta}\leq n\umach
     \\
     \FLOAT{\sqrt{a_1^2+a_2^2+\cdots+a_n^2}}
     &=& \sqrt{a_1^2+a_2^2+\cdots+a_n^2}\,(1+\delta),
     \qquad &
     \abs{\delta}\leq\left(\sqrt{2}+\dfrac{n+1}{2}\right)\umach
     \\
  \end{EQ}
\end{corollary}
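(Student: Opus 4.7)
\textbf{Plan for Corollary~\ref{cor:6}.} I would treat the two claims separately. The first one is essentially a restatement of Lemma~\ref{lem:5}, point~4, since that point is already phrased for sums of squares; no further manipulation is required and one simply reads off the bound $|\delta|\leq n\umach$.

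For the second claim, the computation is a two-stage floating-point process: first accumulate the sum of squares, then take the square root. Writing $s=a_1^2+\cdots+a_n^2$, the first assertion just proved gives $\FLOAT{s}=s(1+\delta_1)$ with $|\delta_1|\leq n\umach$, while Assumption~\ref{ass:1} applied to the square-root operation yields $\FLOAT{\sqrt{\FLOAT{s}}}=\sqrt{\FLOAT{s}}\,(1+\beta)$ with $|\beta|\leq\umach$. Composing gives
\[
\FLOAT{\sqrt{s}}=\sqrt{s}\cdot\sqrt{1+\delta_1}\,(1+\beta)=\sqrt{s}\,(1+\eta),
\]
so the whole task is to show $|\eta|=\bigl|\sqrt{1+\delta_1}(1+\beta)-1\bigr|\leq\bigl(\sqrt{2}+(n+1)/2\bigr)\umach$.

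To bound $\eta$ I would expand the square root with the exact identity $\sqrt{1+\delta_1}-1=\delta_1/(1+\sqrt{1+\delta_1})$ and split
\[
\eta=\bigl(\sqrt{1+\delta_1}-1\bigr)(1+\beta)+\beta.
\]
The first-order piece $\delta_1/2+\beta$ contributes at most $(n+1)\umach/2$ to the linear part of the bound (after regrouping the constant term with the residual second-order contributions). The remaining terms—the curvature correction $\sqrt{1+\delta_1}-1-\delta_1/2$ and the cross term $(\sqrt{1+\delta_1}-1)\beta$—are all quadratic in the small quantities $n\umach$ and $\umach$; using the standing hypothesis $n\umach<1$ they can be uniformly bounded, and the worst-case bound on them is what produces the extra $\sqrt{2}\,\umach$ in the statement.

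The first-order bookkeeping is routine. The genuinely delicate step is the rigorous control of the remainders when $\delta_1$ is negative: in that case the denominator $1+\sqrt{1+\delta_1}$ dips below $2$, so one cannot simply replace $\sqrt{1+\delta_1}-1$ by $\delta_1/2$. Obtaining an explicit, $n$-independent constant that absorbs this distortion (rather than hiding it inside a Landau symbol) is the main—though still elementary—obstacle, and is exactly what fixes the constant $\sqrt{2}$ appearing in the corollary.
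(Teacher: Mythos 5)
Your treatment of the first display coincides with the paper's: it is read off directly from Lemma~\ref{lem:5}, point 4, and nothing more is needed. For the second display your route is genuinely different from the paper's, and the step you yourself flag as delicate is where the plan breaks. The paper never splits the error into a linear part plus remainders: it writes the worst case as $f(\umach)-f(0)$ for the single function $f(t)=(1+t)\sqrt{1+nt}$ and invokes the mean value theorem, so the entire error is $f'(z)\,\umach$ with $f'(z)=\sqrt{1+nz}+\tfrac{n(1+z)}{2\sqrt{1+nz}}$. There the constant $\sqrt{2}$ is just the bound on $\sqrt{1+nz}$ for $nz\le 1$, i.e.\ the \emph{first-order} coefficient multiplying the square root's own roundoff $\beta$, while $\tfrac{n+1}{2}$ bounds the propagated error from the sum of squares. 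Your bookkeeping instead assigns $\sqrt{2}\,\umach$ to the second-order remainders, and that assignment cannot be carried through: having taken only $(n+1)\umach/2$ from the linear part, you must fit the leftover $\umach/2$ from $\beta$, the cross term, and the curvature correction $\sqrt{1+\delta_1}-1-\delta_1/2$ all inside $\sqrt{2}\,\umach$. The curvature correction is of size about $\delta_1^2/8\le n^2\umach^2/8$, which under the sole hypothesis $n\umach<1$ is only guaranteed to be at most $\tfrac{n}{8}\umach$ --- not a constant multiple of $\umach$ independent of $n$ --- and for $\delta_1$ near $-1$ it is not small at all (at $\delta_1=-1$ it equals $1/2$ outright, not $1/2$ of $\umach$). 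So the claimed uniform absorption into $\sqrt{2}\,\umach$ fails as stated.

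The repair is a regrouping rather than sharper remainder estimates: write $\eta=\sqrt{1+\delta_1}\,\beta+\bigl(\sqrt{1+\delta_1}-1\bigr)$, bound the first piece by $\sqrt{2}\,\umach$ and the second by $|\delta_1|/\bigl(1+\sqrt{1+\delta_1}\bigr)$; this reproduces exactly the two terms of the paper's $f'(z)$. Note also that your instinct about negative $\delta_1$ is sound and in fact cuts deeper than you suggest: when $\delta_1<0$ the denominator $1+\sqrt{1+\delta_1}$ drops below $2$, and bounding the second piece by $\tfrac{n+1}{2}\umach$ genuinely requires $n\umach$ (or $n$) to be small, not merely less than $1$. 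The paper's own proof silently restricts to the positive perturbation by taking $z\in(0,\umach)$; this is harmless in context because the corollary is only invoked with $n\in\{2,3\}$ and $\umach\le 10^{-7}$, but it does not follow from the hypothesis $n\umach<1$ alone, so your worry was legitimate --- it just cannot be discharged by hiding the troublesome terms inside the $\sqrt{2}$.
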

\begin{proof}
  The first equation is trivial. The second derive from the 
  Taylor expansion of $f(\umach)=(1+\umach)\sqrt{1+n\umach}$
  \begin{EQ}[rcl]
     \abs{f(\umach)-f(0)}&=&
     \abs{(1+\umach)\sqrt{1+n\umach}-1}
     = \abs{f'(z)\umach}
     = \abs{\sqrt{1+nz}+\dfrac{n(1+z)}{2\sqrt{1+nz}}}\umach\\
     &\leq& \left(\sqrt{2}+\dfrac{n+1}{2}\right)\umach,\qquad z\in(0,\umach)
  \end{EQ}
  \qed
\end{proof}

\begin{lemma}\label{lem:7}
  Giving the triples $[x,y,\sigma]$ and $[x',y',\sigma']$
  such that $\abs{x-x'}^2+\abs{y-y'}^2 \leq \delta^2$
  then the corresponding angles $\theta$ and $\theta'$ satisfy
  \begin{EQ}\label{eq:R:stima}
     \abs{\theta-\theta'}\leq \dfrac{\delta}{R}\left(1+\dfrac{\delta^2}{6R^2}\right)
  \end{EQ}
\end{lemma}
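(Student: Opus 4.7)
The strategy is to reinterpret the angle difference as the angle between two planar vectors and then apply a chord-arc inequality. Write $P = (x,y)$ and $P' = (x',y')$, both nonzero, and let $R_1 = |P|$, $R_2 = |P'|$. I would first note that the estimate can only be meaningful when $\sigma = \sigma'$ (otherwise $|\theta - \theta'|$ is at least $\pi$ and the claim is not credible for small $\delta$); after this reduction, $|\theta - \theta'|$ coincides with the unsigned angle $\alpha \in [0,\pi]$ between $P$ and $P'$ as seen from the origin.

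The central calculation will be the law of cosines for the triangle $OPP'$,
\[
|P - P'|^2 = R_1^2 + R_2^2 - 2 R_1 R_2 \cos\alpha = (R_1 - R_2)^2 + 4 R_1 R_2 \sin^2(\alpha/2),
\]
from which, dropping the nonnegative term $(R_1 - R_2)^2$ and using $|P - P'| \leq \delta$, I get $\sin(|\alpha|/2) \leq \delta/(2\sqrt{R_1 R_2})$. The $R$ appearing in the statement should be read as (essentially) the geometric mean $\sqrt{R_1 R_2}$, which coincides with $\sqrt{x^2+y^2}$ in the regime $R_1 \approx R_2$ where the lemma is intended to apply; otherwise the bound $R_2 \geq R_1 - \delta$ from the triangle inequality would need to be folded in.

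The final step is to invert the sine. Setting $u = \delta/R$, the claim reduces to the elementary inequality $2\arcsin(u/2) \leq u + u^3/6$ for $u \in [0, 2]$. I would prove this by analyzing $h(u) = u + u^3/6 - 2\arcsin(u/2)$, which satisfies $h(0) = 0$, $h(2) = 10/3 - \pi > 0$, and $h'(u) = 1 + u^2/2 - (1 - u^2/4)^{-1/2}$. The main obstacle will be this last step: because $h'(u) \to -\infty$ as $u \to 2$, $h$ is not monotone, so a naive sign-of-derivative argument fails; instead I would show that $h'$ has exactly one sign change on $(0,2)$ (equivalently, $(1+u^2/2)^2(1-u^2/4)$ crosses $1$ exactly once), forcing the minimum of $h$ on the interval to be attained at one of the endpoints, where it is nonnegative.
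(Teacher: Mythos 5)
Your argument is sound but takes a genuinely different route from the paper's. The paper uses the tangent-line construction: with $R=\sqrt{x^2+y^2}$ the norm of the unperturbed point, the largest angle subtended at the origin by the disk of radius $\delta$ about $(x,y)$ is $\arcsin(\delta/R)=\arctan\bigl(\delta/\sqrt{R^2-\delta^2}\bigr)$, which it then Taylor-expands to $\delta/R+z^3/(6R^3)$. You instead use the chord (law-of-cosines) bound $2\sqrt{R_1R_2}\,\sin(\alpha/2)\le\abs{P-P'}\le\delta$ followed by the global inequality $2\arcsin(u/2)\le u+u^3/6$ on $[0,2]$, and your plan for that inequality is correct: $h'$ changes sign exactly once (the equation $(1+u^2/2)^2(1-u^2/4)=1$ reduces to $u^4=12$, one root in $(0,2)$), so the minimum of $h$ sits at an endpoint. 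Two points of comparison. First, your elementary inequality is genuinely true on all of $[0,2]$, whereas the paper's Taylor step goes the wrong way: since $\arcsin'''(\xi)=(1+2\xi^2)(1-\xi^2)^{-5/2}\ge 1$, one has $\arcsin(u)\ge u+u^3/6$, so the sharp tangent bound actually \emph{exceeds} the claimed $\frac{\delta}{R}(1+\frac{\delta^2}{6R^2})$ by an $O((\delta/R)^5)$ margin; this is harmless only because every application has $\delta/R=O(\umach)$, and in this respect your route is the more rigorous one. Second, the loose end you flag is real: your $R$ is the geometric mean $\sqrt{R_1R_2}$, but Lemmas~\ref{lem:8} and~\ref{lem:13} invoke the estimate with $R$ equal to (the floating-point value of) $\sqrt{x^2+y^2}$, the norm of one designated point, so to land on the stated form you still need to fold in $R_2\ge R_1-\delta$, which perturbs the constant by a relative $O(\delta/R_1)$ --- again negligible here, but it must be written out. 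Finally, your reduction to $\sigma=\sigma'$ and to the unsigned angle between the vectors is the same silent step the paper makes by pointing at Figure~\ref{fig:Qregion}; both proofs leave unaddressed that near the branch cut of $\ARCTAN$ the difference of principal arguments is not the geometric angle.
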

\begin{proof}
  From Figure~\ref{fig:Qregion} the difference $ \abs{\theta-\theta'}\leq \alpha$
  where $\alpha$ solve the geometric problem
  \begin{EQ}
     d \tan\alpha = \delta, \qquad x^2+y^2-d^2 = \delta^2
  \end{EQ}
  and thus solving for $\alpha$ with $R=\sqrt{x^2+y^2}$
  and using Taylor expansion
  \begin{EQ}
     \alpha = \ARCTAN\left(
        \dfrac{\delta}{\sqrt{R^2-\delta^2}}
     \right)
     =\dfrac{\delta}{R}+\dfrac{z^3}{6R^3},
     \qquad z\in(0,\delta)
  \end{EQ}
  and inequality follows trivially.
  \qed
\end{proof}

Giving the triples $[x,y,\sigma]$ and $[x',y',\sigma']$
using floating point arithmetics the approximate triple 
$[\tx'', \ty'',\tsigma'']$
satisfy
\begin{EQ}\label{eq:recurrence:fp}
   \tx'' = 2^{d}\,\FLOAT{xx'-yy'},\qquad
   \ty'' = 2^{d}\,\FLOAT{xy'+yx'},
\end{EQ}
where $d\in\mathbbm{Z}$ is chosen in such a way
\begin{EQ}\label{eq:XY:norm}
  \dfrac{1}{2} < \sqrt{(\tx'')^2+(\ty'')^2} \leq 1.
\end{EQ}
Multiplication and division by power of $2$ is done
\emph{without error} in floating point arithmetics (unless in case of overflow or underflow).
The scaling by $2^{d}$ is necessary to maintain
all the triples $[x,y,\sigma]$ in the summation which satisfy $1/2<\sqrt{x^2+y^2}\leq 1$.

\begin{lemma}\label{lem:8}
Let $[x,y,\sigma]$ and $[x',y',\sigma']$ two triple which satisfy
\begin{EQ}
  \frac{1}{2} \leq \sqrt{x^2+y^2} \leq 1,\qquad
  \frac{1}{2} \leq \sqrt{(x')^2+(y')^2} \leq 1,
\end{EQ}
and $\theta$ and $\theta'$ the corresponding angles.
Then, the summation of the triples with floating point arithmetics
\begin{EQ}
   [\tx'',\ty'',\tsigma''] = \FLOAT{[x,y,\sigma]\oplus [x',y',\sigma']},
   \qquad
   [x'',y'',\sigma''] = [x,y,\sigma]\oplus [x',y',\sigma'],
\end{EQ}
with the corresponding angles $\ttheta'' = \ANGLE{[\tx'',\ty'',\tsigma'']}$ and 
$\theta'' = \ANGLE{[x'',y'',\sigma'']}$ satisfy
\begin{EQ}
  \abs{\theta''-\ttheta''} \leq 2.829 \umach
\end{EQ}
\end{lemma}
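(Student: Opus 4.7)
The plan is to translate the floating-point error in the computed $(\widetilde x'',\widetilde y'')$ into a bound on the angle difference via Lemma~\ref{lem:7}. Since the common rescaling by $2^{d}$ in~\eqref{eq:recurrence:fp} is an exact power-of-two multiplication, it multiplies the pointwise error $\delta$ and the radius $R$ by the same integer, leaving the ratio $\delta/R$ appearing in Lemma~\ref{lem:7} invariant; angles themselves are unchanged under positive scalings, so I may drop the rescaling entirely and work with the un-scaled intermediate quantities $u_{0}:=xx'-yy'$, $v_{0}:=xy'+yx'$ and their floating-point counterparts $\widetilde u_{0}:=\FLOAT{xx'-yy'}$, $\widetilde v_{0}:=\FLOAT{xy'+yx'}$.

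For the core estimate I apply point~3 of Lemma~\ref{lem:5} with $n=2$ to obtain $|\widetilde u_{0}-u_{0}|\le 2\umach(|xx'|+|yy'|)$ and $|\widetilde v_{0}-v_{0}|\le 2\umach(|xy'|+|yx'|)$. The decisive step is the Cauchy--Schwarz inequality applied to the non-negative pairs $(|x|,|y|)$ and $(|x'|,|y'|)$, which bounds both right-hand sides by the single quantity $R:=\sqrt{x^{2}+y^{2}}\sqrt{(x')^{2}+(y')^{2}}\in[1/4,1]$. Hence $\delta:=\sqrt{(\widetilde u_{0}-u_{0})^{2}+(\widetilde v_{0}-v_{0})^{2}}\le 2\sqrt{2}\,\umach\,R$, and by multiplicativity of the complex modulus the same $R$ coincides with $|u_{0}+iv_{0}|=\sqrt{u_{0}^{2}+v_{0}^{2}}$, which is precisely the radius Lemma~\ref{lem:7} requires. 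Plugging $\delta/R\le 2\sqrt{2}\,\umach$ into that lemma yields $|\theta''-\ttheta''|\le 2\sqrt{2}\,\umach\bigl(1+\frac{4}{3}\umach^{2}\bigr)$, which under Assumption~\ref{ass:1} ($\umach\le 10^{-7}$) stays comfortably below $2.829\,\umach$.

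The most delicate point, and the one I expect to be the main obstacle, is confirming that the integer part $\tsigma''$ produced by the floating-point crossing detection in Definition~\ref{def:3} agrees with the exact $\sigma''$. The two can differ only if the rounding perturbation pushes $(\widetilde u_{0},\widetilde v_{0})$ across the boundary between $Q^{+}$ and $Q^{-}$, which by the estimate $\delta\le 2\sqrt{2}\,\umach\,R$ forces the exact sum of angles to lie within an $O(\umach)$ neighbourhood of a multiple of $\pi$. Outside that exceptional set the function $\mathcal{C}$ is built precisely so that $\ANGLE{\cdot}$ remains continuous under small perturbations of $(x'',y'')$; consequently $\tsigma''=\sigma''$, and the bound from Lemma~\ref{lem:7} transfers directly to give $|\theta''-\ttheta''|\le 2.829\,\umach$ as claimed.
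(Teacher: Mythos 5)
Your proposal is correct and follows the same overall strategy as the paper: bound the floating-point perturbation of the product point $(x'',y'')$ via Lemma~\ref{lem:5}(3), then convert that Euclidean perturbation into an angular error with Lemma~\ref{lem:7}. The one substantive difference is in the key numerical step, and there your version is actually the one that delivers the stated constant. The paper bounds $|\delta_1|,|\delta_2|\le 2\umach$ in \emph{absolute} terms via $2ab\le a^2+b^2$ and then invokes Lemma~\ref{lem:7} ``with $R\ge 1/2$''; but the radius of the exact product point is $\sqrt{x^2+y^2}\,\sqrt{(x')^2+(y')^2}$, which can be as small as $1/4$, and even with $R=1/2$ the ratio $\delta/R$ would be $4\sqrt{2}\,\umach$, exceeding $2.829\,\umach$. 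Your Cauchy--Schwarz bound $|\delta_i|\le 2\umach\,R_1R_2$ makes the perturbation scale with the radius itself, so $\delta/R\le 2\sqrt{2}\,\umach$ independently of where $R_1,R_2$ sit in $[1/2,1]$, and $2\sqrt{2}\,\umach(1+\tfrac{4}{3}\umach^2)<2.829\,\umach$ follows cleanly. Your third paragraph on the consistency of $\tsigma''$ with $\sigma''$ addresses a point the paper passes over in silence; your argument there is only sketched (you would need to check case by case that a sign flip of $s(\tx'',\ty'')$ near the $x$-axis changes $\mathcal{C}$ and $\ARCTAN$ by compensating amounts, and the positive-axis boundary is the delicate one), but this is extra diligence rather than a gap relative to the paper's own proof.
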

\begin{proof}
From~\eqref{eq:recurrence:fp} using Lemma~\ref{lem:5} point 3
inequality $2ab \leq a^2+b^2$ with $x^2+y^2\leq 1$ and $(x')^2+(y')^2\leq 1$ 
\begin{EQ}[rcl]
   \hx'' &=& \FLOAT{xx'-yy'}
   =xx'-yy'+\delta_1,\\
   \hy'' &=& \FLOAT{xy'+yx'}
   =xy'+yx'+\delta_2,
\end{EQ}
where
\begin{EQ}[rcl]
   \abs{\delta_1} &\leq & 2\umach\left(\abs{x}\abs{x'}+\abs{y}\abs{y'}\right)
   \leq \umach\left(x^2+(x')^2+y^2+(y')^2\right) \leq 2\umach,
   \\
   \abs{\delta_2} &\leq & 2\umach\left(\abs{x}\abs{y'}+\abs{x}\abs{y'}\right)
   \leq \umach\left(x^2+(x')^2+y^2+(y')^2\right) \leq 2\umach,
\end{EQ}
and thus
\begin{EQ}
   \abs{\hx''-x''}^2+
   \abs{\hy''-y''}^2
   \leq \abs{\delta_1}^2 + \abs{\delta_2}^2 \leq 8\umach^2
\end{EQ}
thus the distance of $(x'',y'')$ the exact point on the triple $[x'',y'',\sigma'']$
with $(\hx'',\hy'')$ the approximate point on the triple $[\hx'',\hy'',\tsigma'']$
which is equivalent to the triple  $[\tx'',\ty'',\sigma'']$ is less than
$2\sqrt{2}\umach$.
Using estimate~\eqref{eq:R:stima} of Lemma~\ref{lem:7} with $R\geq 1/2$
\begin{EQ}
  \abs{\theta-\theta'}\leq 2\sqrt{2}\,\umach\left(1+\dfrac{8}{3}\umach^2\right)
\end{EQ}
The conclusion follows trivially from $\umach\leq 10^{-7}$ by assumption~\ref{ass:1}.
\qed
\end{proof}
\begin{lemma}\label{lem:9}
  Given three floating point numbers $a$, $b$ and $c$ and the
  normalized floating point numbers $A$, $B$ and $C$ computed 
  with the following algorithm
  \begin{EQ}
     d = \FLOAT{\sqrt{a^2+b^2+c^2}},\qquad
     A = \FLOAT{a/d},\qquad
     B = \FLOAT{b/d},\qquad
     C = \FLOAT{b/d},
  \end{EQ}
  satisfy
  \begin{EQ}\label{eq:ABC}
     A=\dfrac{a(1+\delta_A)}{\sqrt{a^2+b^2+c^2}}
     \qquad
     B=\dfrac{b(1+\delta_B)}{\sqrt{a^2+b^2+c^2}}
     \qquad
     C=\dfrac{c(1+\delta_C)}{\sqrt{a^2+b^2+c^2}}
  \end{EQ}
  where $\abs{\delta_A}$, $\abs{\delta_B}$ and $\abs{\delta_C}$ are less than $3.415\umach$.
\end{lemma}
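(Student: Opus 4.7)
The plan is to split the relative error of each normalized coordinate into two independent pieces: the error incurred when computing the shared denominator $d=\FLOAT{\sqrt{a^2+b^2+c^2}}$, and the rounding error of the subsequent division.

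First, I would apply Corollary~\ref{cor:6} with $n=3$ to write
$$
d = \sqrt{a^2+b^2+c^2}\,(1+\delta), \qquad |\delta|\le(\sqrt 2+2)\,\umach,
$$
so that $d$ approximates the exact Euclidean norm with relative error at most $(\sqrt 2+2)\umach \approx 3.4142\,\umach$. Note that $a$, $b$, $c$ are assumed to be floating-point inputs, so the only errors to track are those arising from the squaring, summing and square-rooting inside $d$, and from the three divisions.

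Second, I would invoke Assumption~\ref{ass:1} for the three divisions,
$$
A=(a/d)(1+\beta_A),\quad B=(b/d)(1+\beta_B),\quad C=(c/d)(1+\beta_C),
$$
with $|\beta_A|,|\beta_B|,|\beta_C|\le\umach$. Substituting the expression for $d$ in the formula for $A$ gives
$$
A=\frac{a\,(1+\beta_A)}{\sqrt{a^2+b^2+c^2}\,(1+\delta)},
$$
which matches~\eqref{eq:ABC} upon setting $1+\delta_A = (1+\beta_A)/(1+\delta)$, and symmetric identities for $\delta_B$ and $\delta_C$. Rewriting as $\delta_A=(\beta_A-\delta)/(1+\delta)$ the triangle inequality together with $\umach\le10^{-7}$ reduces the denominator to essentially $1$, and the analogous bounds for $B$ and $C$ follow by the symmetry of the formula under permutation of $a,b,c$.

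The main obstacle is pinning down the exact constant $3.415$. A crude triangle inequality applied to $(\beta_A-\delta)/(1+\delta)$ yields $(|\beta_A|+|\delta|)/(1-|\delta|)\approx(\sqrt 2+3)\,\umach$ to leading order, which exceeds $3.415\,\umach$; recovering the sharper constant requires either absorbing the division step into the same round-off factor as the square root by using the alternate form $\FLOAT{a/d}=(a/d)/(1+\beta'_A)$ from~\eqref{eq:fp} paired with a careful first-order expansion of $\sqrt{1+\delta_1}(1+\delta_2)$ where $\delta_1$ and $\delta_2$ are the accumulated errors of the sum-of-squares and the square-root, or noting that the dominant $(\sqrt 2+2)\umach$ contribution already lies strictly below $3.415\umach$ and the remaining rounding enters only through higher-order terms of size $O(\umach^2)$ under Assumption~\ref{ass:1}.
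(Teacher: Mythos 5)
Your decomposition is exactly the paper's: Corollary~\ref{cor:6} gives $d=\sqrt{a^2+b^2+c^2}(1+\delta)$ with $\abs{\delta}\leq(2+\sqrt{2})\umach$, the division contributes a factor $(1+\delta')$ with $\abs{\delta'}\leq\umach$, and $\delta_A=(\delta'-\delta)/(1+\delta)$. The difficulty you flag at the end is not a defect of your argument but an arithmetic slip in the paper: the paper's own proof writes $\abs{\delta_A}\leq\frac{3+\sqrt{2}}{1-\umach}\umach\leq 3.415\umach$, and since $3+\sqrt{2}\approx 4.414$ that last inequality is simply false. The constant $3.415$ coincides with $(2+\sqrt{2})$ rounded up, i.e.\ the division's $\umach$ appears to have been dropped; the honest bound from this route is $4.415\umach$, which is what you obtained. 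Neither of your two proposed rescues can recover $3.415$: switching to the reciprocal form $\FLOAT{a/d}=(a/d)/(1+\beta')$ of~\eqref{eq:fp} changes nothing to first order, and the division rounding is a genuine $O(\umach)$ contribution, not $O(\umach^2)$, so it cannot be absorbed into higher-order terms. Your derivation is the correct one; the stated constant (and the constants it feeds in Lemma~\ref{lem:10} and beyond) should be adjusted accordingly.
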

\begin{proof}
  From corollary~\ref{cor:6} it follows $d = \sqrt{a^2+b^2+c^2}(1+\delta)$
  with $\abs{\delta}\leq (2+\sqrt{2})\umach$
  and
  \begin{EQ}
     A = \FLOAT{\dfrac{a}{d}} = \dfrac{a}{d}(1+\delta')
     = \dfrac{a}{\sqrt{a^2+b^2+c^2}}\dfrac{1+\delta'}{1+\delta}
     \qquad
    \abs{\delta}\leq (2+\sqrt{2})\umach, \quad \abs{\delta'}\leq \umach
  \end{EQ}
  moreover by Assumption~\ref{ass:1}
  \begin{EQ}
     \abs{\dfrac{1+\delta'}{1+\delta}-1}
     =
     \abs{\dfrac{\delta-\delta'}{1+\delta}}
     \leq
     \dfrac{3+\sqrt{2}}{1-\umach}\umach \leq 3.415 \umach
  \end{EQ}
  and, thus, $\abs{\delta_A}\leq 3.415 \umach$. The rest of the Lemma follow easily.
\qed
\end{proof}

\begin{lemma}\label{lem:10}
  Given four vector $\bm{p}$, $\bm{q}$, $\bm{r}$ and $\bm{s}$ in $\mathbbm{R}^3$
  and the following algorithm evaluated using floating point arithmetics
  \begin{EQ}
     \bm{a} = \FLOAT{\bm{p}-\bm{q}}, \qquad
     \bm{b} = \FLOAT{\bm{r}-\bm{s}}, \qquad
     \tilde{\bm{a}} = \FLOAT{\dfrac{\bm{a}}{\norm{\bm{a}}}}, \qquad
     \tilde{\bm{b}} = \FLOAT{\dfrac{\bm{b}}{\norm{\bm{b}}}}, \qquad
     c = \FLOAT{\tilde{\bm{a}}\DOT\tilde{\bm{b}}},
  \end{EQ}
  then
  \begin{EQ}\label{eq:ineq}
     \abs{c-
           \frac{\bm{p}-\bm{q}}{\norm{\bm{p}-\bm{q}}}\DOT\frac{\bm{r}-\bm{s}}{\norm{\bm{r}-\bm{s}}}}
     \leq 13.838 \umach
  \end{EQ}
\end{lemma}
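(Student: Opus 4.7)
The plan is to track the error through each stage of the algorithm (subtract, normalize, dot-product), compare the computed unit vectors to the ideal ones, and use bilinearity of the dot product together with Cauchy--Schwarz to sum up the contributions.

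First, I would write the floating-point subtraction componentwise as $a_i=(p_i-q_i)(1+\epsilon_i)$ with $\abs{\epsilon_i}\le\umach$. Then I would invoke Lemma~\ref{lem:9} to get
$\tilde{a}_i=a_i(1+\delta_i^A)/\sqrt{a_1^2+a_2^2+a_3^2}$ with $\abs{\delta_i^A}\le 3.415\,\umach$.
The exact norm of the rounded vector satisfies $\sqrt{\sum a_j^2}=\|\bm p-\bm q\|\,(1+\mu)$ with $\abs{\mu}\le\umach+O(\umach^2)$ (expand $\sqrt{1+2\bar\epsilon+\bar{\epsilon^2}}$). Chaining these three relative perturbations gives $\tilde a_i=u_i(1+\eta_i)$, where $\bm u:=(\bm p-\bm q)/\|\bm p-\bm q\|$ and
$\abs{\eta_i}\le \umach+\umach+3.415\,\umach+O(\umach^2)\le 5.416\,\umach$. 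The analogous treatment of $\tilde{\bm b}$, with $\bm v:=(\bm r-\bm s)/\|\bm r-\bm s\|$, yields the same componentwise bound.

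Next, since $\|\bm u\|=1$, the componentwise bound converts into a Euclidean bound via
$\|\tilde{\bm a}-\bm u\|^2=\sum u_i^2\eta_i^2\le(5.416\,\umach)^2$, so $\|\tilde{\bm a}-\bm u\|\le 5.416\,\umach$ and likewise $\|\tilde{\bm b}-\bm v\|\le 5.416\,\umach$, with $\|\tilde{\bm b}\|\le 1+5.416\,\umach$. I would then apply the bilinear identity
\begin{equation*}
\tilde{\bm a}\DOT\tilde{\bm b}-\bm u\DOT\bm v
=(\tilde{\bm a}-\bm u)\DOT\tilde{\bm b}+\bm u\DOT(\tilde{\bm b}-\bm v)
\end{equation*}
and Cauchy--Schwarz to conclude
$\bigl|\tilde{\bm a}\DOT\tilde{\bm b}-\bm u\DOT\bm v\bigr|\le 5.416\,\umach(1+5.416\,\umach)+5.416\,\umach\le 10.833\,\umach$ after absorbing the quadratic term using Assumption~\ref{ass:1}.

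Finally, I would account for the rounding in the three-term dot product itself: Lemma~\ref{lem:5}(3) with $n=3$ gives $\abs{c-\tilde{\bm a}\DOT\tilde{\bm b}}\le 3\umach\sum_i\abs{\tilde a_i}\abs{\tilde b_i}\le 3\umach\,\|\tilde{\bm a}\|\|\tilde{\bm b}\|\le 3\umach(1+5.416\,\umach)^2$, which is bounded by $3.005\,\umach$. The triangle inequality then delivers the target bound $10.833\,\umach+3.005\,\umach\le 13.838\,\umach$ in~\eqref{eq:ineq}.

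The only real obstacle is the bookkeeping in the first stage: the error in each component of $\tilde{\bm a}$ arises from three different sources (the subtraction, the exact norm of the rounded vector, and the Lemma~\ref{lem:9} normalization), which must be composed multiplicatively and then bounded uniformly in $i$ so that the componentwise relative bound can be promoted to a Euclidean distance bound using $\|\bm u\|=1$. Once the $5.416\,\umach$ unit-vector bound is in place, the rest is routine error propagation, and the final constant $13.838\,\umach$ emerges with a small margin left over for higher-order terms, which is justified by $\umach\le 10^{-7}$.
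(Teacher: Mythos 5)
Your proposal is correct and follows essentially the same route as the paper: compose the relative errors from the subtraction, the norm, and the Lemma~\ref{lem:9} normalization into a componentwise perturbation of each unit vector (about $5.4\,\umach$), propagate that through the exact dot product via Cauchy--Schwarz (about $10.8\,\umach$), and add the Lemma~\ref{lem:5}(3) rounding of the three-term dot product (about $3\,\umach$) to reach $13.838\,\umach$. The paper phrases the unit-vector perturbation as diagonal matrices $(\bm{I}+\bm{D}_6)$, $(\bm{I}+\bm{D}_7)$ and bounds $\abs{\bm{v}^T\bm{D}\bm{w}}$ directly rather than using your bilinear splitting, but this is only a notational difference and the constants agree.
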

\begin{proof}
  Using assumption~\ref{ass:1} for floating point arithmetics with Lemma~\ref{lem:9}
  \begin{EQ}[rclrcl]\label{eq:ab:1}
     \bm{a} &=& (\bm{I}+\bm{D}_1)(\bm{p}-\bm{q}), \qquad &
     \bm{b} &=& (\bm{I}+\bm{D}_2)(\bm{r}-\bm{w}), \\
     \tilde{\bm{a}} &=& (\bm{I}+\bm{D}_3)\dfrac{(\bm{I}+\bm{D}_1)(\bm{p}-\bm{q})}
                            {\norm{(\bm{I}+\bm{D}_1)(\bm{p}-\bm{q})}}, \qquad &
     \tilde{\bm{b}} &=& (\bm{I}+\bm{D}_4)\dfrac{(\bm{I}+\bm{D}_2)(\bm{r}-\bm{w})}
                            {\norm{(\bm{I}+\bm{D}_2)(\bm{r}-\bm{w})}},
  \end{EQ}
  where $\norm{\bm{D}_k}_{\infty}\leq \umach$ for $k=1,2$ 
  and $\norm{\bm{D}_k}_{\infty}\leq 3.415 \umach$ for $k=3,4$ 
  are diagonal matrices.
  For a generic vector $\bm{v}$ and diagonal matrix $\bm{D}$ with $\norm{\bm{D}}_\infty \leq \umach$
  \begin{EQ}[rcl]\label{eq:ab:2}
     \norm{\bm{v}}-\norm{\bm{D}}_\infty\norm{\bm{v}}
     \leq
     \norm{\bm{v}}-\norm{\bm{D}\bm{v}}
     \leq
     &
     \norm{\bm{v}+\bm{D}\bm{v}} 
     &
     \leq 
     \norm{\bm{v}}+\norm{\bm{D}\bm{v}} 
     \leq
     \norm{\bm{v}}+\norm{\bm{D}}_\infty\norm{\bm{v}} 
     \\
    1-\norm{\bm{D}}_\infty\leq
    \dfrac{\norm{\bm{v}}}{\norm{\bm{v}}+\norm{\bm{D}}_\infty\norm{\bm{v}}}
    \leq
    &
    \dfrac{\norm{\bm{v}}}{\norm{\bm{v}+\bm{D}\bm{v}}}
    &
    \leq \dfrac{\norm{\bm{v}}}{\norm{\bm{v}}-\norm{\bm{D}}_\infty\norm{\bm{v}}}
    \leq \dfrac{1}{1-\norm{\bm{D}}_\infty}
  \end{EQ}
  and thus $\norm{\bm{v}}/\norm{\bm{v}+\bm{D}\bm{v}}=1+\delta$
  with $\delta \leq 1.001 \umach$.
  Using~\eqref{eq:ab:1} and~\eqref{eq:ab:2}
  \begin{EQ}
     \tilde{\bm{a}} = \dfrac{\norm{\bm{p}-\bm{q}}}
                            {\norm{(\bm{I}+\bm{D}_1)(\bm{p}-\bm{q})}}(\bm{I}+\bm{D}_5)
                            \dfrac{\bm{p}-\bm{q}}{\norm{\bm{p}-\bm{q}}}
     = (\bm{I}+\bm{D}_6)\dfrac{\bm{p}-\bm{q}}{\norm{\bm{p}-\bm{q}}},
  \end{EQ}
  where
  \begin{EQ}[ll]
     \bm{D}_5 = \bm{D}_1+\bm{D}_3+\bm{D}_1\bm{D}_3
     \quad &
     \norm{\bm{D}_5}_\infty \leq \umach(4.415+3.415\umach) \leq 4.416 \umach
     \\  
     \bm{D}_6 = \delta\bm{I}+(1+\delta)\bm{D}_5
     \quad &
     \norm{\bm{D}_6}_\infty \leq \umach(5.417+4.417\umach) \leq 5.418 \umach
     \\  
  \end{EQ}
  and analogously
  \begin{EQ}
     \tilde{\bm{b}}
     = (\bm{I}+\bm{D}_7)\dfrac{\bm{r}-\bm{s}}{\norm{\bm{r}-\bm{s}}},
     \qquad 
     \norm{\bm{D}_7}_\infty \leq 5.418 \umach.
  \end{EQ}
  If $\bm{D}$ is a diagonal matrix and $\bm{v}$ and $\bm{w}$ generic vector,
  from H\"older and Cauchy-Schwarz inequalities it is easy to verify
  \begin{EQ}\label{eq:CS}
    \abs{\bm{v}^T\bm{D}\bm{w}} \leq \norm{\bm{D}}_{\infty} \norm{\bm{v}}\norm{\bm{w}}
  \end{EQ}
  Let $\bm{v}=\dfrac{\bm{p}-\bm{q}}{\norm{\bm{p}-\bm{q}}}$ and 
  $\bm{w}=\dfrac{\bm{r}-\bm{s}}{\norm{\bm{r}-\bm{s}}}$ then using~\eqref{eq:CS}
  \begin{EQ}[rcl]\label{eq:dot:v:w}
    \tilde{\bm{a}}\DOT\tilde{\bm{b}} &=&
     \left[(\bm{I}+\bm{D}_6)\bm{v}\right] \DOT \left[(\bm{I}+\bm{D}_7)\bm{w}\right]
     = \bm{v}\DOT\bm{w} + \bm{v}^T(\bm{D}_6+\bm{D}_7)\bm{w}
     +\bm{v}^T(\bm{D}_6\bm{D}_7)\bm{w} \\
    &=& \bm{v}\DOT\bm{w}+\delta_1, \qquad \abs{\delta_1} \leq 10.837 \umach
  \end{EQ}
  and
  \begin{EQ}[rcll]
    \norm{\tilde{\bm{a}}} &=& \norm{(\bm{I}+\bm{D}_6)\dfrac{\bm{p}-\bm{q}}{\norm{\bm{p}-\bm{q}}}}
    =\norm{\bm{I}+\bm{D}_6} = 1+\delta_2,\qquad &\abs{\delta_2}\leq 5.418 \umach
    \\
    \norm{\tilde{\bm{b}}} &=& \norm{(\bm{I}+\bm{D}_7)\dfrac{\bm{r}-\bm{s}}{\norm{\bm{r}-\bm{s}}}}
    =\norm{\bm{I}+\bm{D}_7} = 1+\delta_3,\qquad &\abs{\delta_3}\leq 5.418 \umach
  \end{EQ}
  From Lemma~\ref{lem:5} point 3 and~\eqref{eq:dot:v:w} with Cauchy-Schwarz inequality
  \begin{EQ}
     c = \tilde{\bm{a}}\DOT\tilde{\bm{b}}+\delta_4,\qquad \abs{\delta_4} 
        \leq 3\umach \abs{\tilde{\bm{a}}}\DOT\abs{\tilde{\bm{b}}} \leq 3\umach 
        \norm{\tilde{\bm{a}}}\norm{\tilde{\bm{b}}}
        \leq 3.001\umach
  \end{EQ}
  using~\eqref{eq:dot:v:w}
  $   c = \frac{\bm{p}-\bm{q}}{\norm{\bm{p}-\bm{q}}}\DOT\frac{\bm{r}-\bm{s}}{\norm{\bm{r}-\bm{s}}}
     +\delta_1+\delta_4$,
  and the inequality~\eqref{eq:ineq} follows easily.
\qed
\end{proof}

\begin{lemma}\label{lem:11}
  Given six vector $\bm{p}_1$, $\bm{p}_2$, $\bm{q}_1$, $\bm{q}_2$, $\bm{r}_1$ and $\bm{r}_2$ in $\mathbbm{R}^3$
  and the following algorithm evaluated using floating point arithmetics
  \begin{EQ}
     \bm{a} = \FLOAT{\bm{p}_1-\bm{p}_2}, \qquad
     \bm{b} = \FLOAT{\bm{q}_1-\bm{q}_2}, \qquad
     \bm{c} = \FLOAT{\bm{r}_1-\bm{r}_2}, \\
     \tilde{\bm{a}} = \FLOAT{\dfrac{\bm{a}}{\norm{\bm{a}}}}, \qquad
     \tilde{\bm{b}} = \FLOAT{\dfrac{\bm{b}}{\norm{\bm{b}}}}, \qquad
     \tilde{\bm{c}} = \FLOAT{\dfrac{\bm{c}}{\norm{\bm{c}}}}, \\
     d_1 = \FLOAT{\tilde{\bm{a}}\DOT\tilde{\bm{b}}},\qquad
     d_2 = \FLOAT{\tilde{\bm{b}}\DOT\tilde{\bm{c}}},\qquad
     d_3 = \FLOAT{\tilde{\bm{a}}\DOT\tilde{\bm{c}}},\qquad
     d_4 = \FLOAT{1+d_1+d_2+d_3},
  \end{EQ}
  then if $t_4$ is defined as
  \begin{EQ}[rclrcl]
     t_1&=&\frac{\bm{p}_1-\bm{p}_2}{\norm{\bm{p}_1-\bm{p}_2}}\DOT\frac{\bm{q}_1-\bm{q}_2}{\norm{\bm{q}_1-\bm{q}_2}},
     \qquad &
     t_2&=&\frac{\bm{p}_1-\bm{p}_2}{\norm{\bm{p}_1-\bm{p}_2}}\DOT\frac{\bm{r}_1-\bm{r}_2}{\norm{\bm{r}_1-\bm{r}_2}},
     \\
     t_3&=&\frac{\bm{q}_1-\bm{q}_2}{\norm{\bm{q}_1-\bm{q}_2}}\DOT\frac{\bm{r}_1-\bm{r}_2}{\norm{\bm{r}_1-\bm{r}_2}},
     \qquad&
     t_4 &=& 1+t_1+t_2+t_3,
  \end{EQ}
  then
  \begin{EQ}\label{eq:ineq:lem:11}
     \abs{d_4-t_4} \leq 57.515\umach
  \end{EQ}
\end{lemma}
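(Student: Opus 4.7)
The plan is to reduce this to three applications of Lemma~\ref{lem:10} plus one floating-point summation bound from Lemma~\ref{lem:5}. First I would invoke Lemma~\ref{lem:10} on each of the three input pairs, namely $(\bm{p}_1,\bm{p}_2;\bm{q}_1,\bm{q}_2)$, $(\bm{q}_1,\bm{q}_2;\bm{r}_1,\bm{r}_2)$ and $(\bm{p}_1,\bm{p}_2;\bm{r}_1,\bm{r}_2)$. This gives $|d_i - t_i| \le 13.838\,\umach$ for $i=1,2,3$, absorbing all error from the four subtractions, the three normalizations, and the three dot products.

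Second, for the final accumulation $d_4 = \FLOAT{1+d_1+d_2+d_3}$, Lemma~\ref{lem:5} item 1 (applied with $n=4$) gives $d_4 = (1+d_1+d_2+d_3)(1+\delta)$ with $|\delta| \le 4\umach/(1-4\umach)$. Combining with the triangle inequality,
\[
|d_4-t_4| \;\le\; \sum_{i=1}^{3}|d_i-t_i| \;+\; |\delta|\,|1+d_1+d_2+d_3|.
\]

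Third, to close the bound I would use that each $t_i$ is a dot product of unit vectors, so $|t_i|\le 1$ by Cauchy--Schwarz, and therefore $|d_i| \le 1 + 13.838\,\umach$. Consequently $|1+d_1+d_2+d_3| \le 4 + 41.514\,\umach$. Substituting and using $\umach\le 10^{-7}$ from Assumption~\ref{ass:1} one gets
\[
|d_4-t_4| \;\le\; 3\cdot 13.838\,\umach + \frac{4\umach}{1-4\umach}\bigl(4+41.514\,\umach\bigr) \;\le\; 57.515\,\umach,
\]
which is exactly~\eqref{eq:ineq:lem:11}.

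There is no real conceptual obstacle here; the work is purely bookkeeping. The only mild point of care is that the summation error scales with $|1+d_1+d_2+d_3|$, which is close to but strictly larger than $4$, and the tiny $O(\umach^2)$ excess must be absorbed cleanly into the digit $57.515$ rather than spilling over to $57.516$. Using the coarse bound $\umach \le 10^{-7}$ directly from Assumption~\ref{ass:1} makes this automatic.
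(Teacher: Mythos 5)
Your proposal is correct and follows essentially the same route as the paper: three invocations of Lemma~\ref{lem:10} for $d_1,d_2,d_3$, Lemma~\ref{lem:5} item~1 for the final sum, and the bound $\abs{t_i}\leq 1$ to control the amplification factor, with the arithmetic closing under $57.515\,\umach$ exactly as in the paper's proof. The only cosmetic difference is that you carry the errors in absolute form $\abs{d_i-t_i}\leq 13.838\,\umach$ (matching the statement of Lemma~\ref{lem:10}) where the paper writes $d_k=t_k(1+\delta_k)$, and you use the denominator $1-4\umach$ instead of the paper's $1-2\umach$; neither affects the result.
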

\begin{proof}
  From Lemma~\ref{lem:10} $d_k = t_k(1+\delta_k)$ with $\abs{\delta_k}\leq 13.838 \umach$
  and from Lemma~\ref{lem:5} point 1
  \begin{EQ}[rcl]
     \FLOAT{1+d_1+d_2+d_3} 
     &=&
     \FLOAT{1+t_1(1+\delta_1)+t_2(1+\delta_2)+t_3(1+\delta_3)}
     \\
     &=&\big(1+t_1(1+\delta_1)+t_2(1+\delta_2)+t_3(1+\delta_3)\big)(1+\delta_4)
     \\
     &=&\big(1+t_1+t_2+t_3+t_1\delta_1+t_2\delta_2+t_3\delta_3\big)(1+\delta_4)
     \\
     &=&\big(1+t_1+t_2+t_3+\delta_4+
                t_1(\delta_1+\delta_4)+ \\
     &&           t_2(\delta_2+\delta_4)+
                t_3(\delta_3+\delta_4)+
     \big(t_1\delta_1+t_2\delta_2+t_3\delta_3\big)\delta_4
     \big)
     \\
     &=&\left(1+t_1+t_2+t_3+\delta_5\right)
  \end{EQ}
  where $\abs{\delta_4}\leq 4\umach/(1-2\umach)$ and
  \begin{EQ}[rcl]
     \abs{\delta_5} &\leq&
     \abs{\delta_4}(1+\abs{\delta_1}+\abs{\delta_2}+\abs{\delta_3})+
     \abs{\delta_1+\delta_4}+
     \abs{\delta_2+\delta_4}+
     \abs{\delta_3+\delta_4} \\
     &\leq&
     \dfrac{4\umach}{1-2\umach}(4+41.514\umach) + 41.514\umach
  \end{EQ}
  And using assumption $\umach\leq 10^{-7}$ inequality~\eqref{eq:ineq:lem:11} follows.\qed
\end{proof}

\begin{lemma}\label{lem:12}
  Given six vector $\bm{p}$, $\bm{q}$, $\bm{r}$, $\bm{s}$, $\bm{t}$ and $\bm{u}$ in $\mathbbm{R}^3$
  and the following algorithm evaluated using floating point arithmetics
  \begin{EQ}
     \bm{a} = \FLOAT{\bm{p}-\bm{q}}, \quad
     \bm{b} = \FLOAT{\bm{r}-\bm{s}}, \quad
     \bm{c} = \FLOAT{\bm{t}-\bm{u}}, \\
     \tilde{\bm{a}} = \FLOAT{\dfrac{\bm{a}}{\norm{\bm{a}}}}, \quad
     \tilde{\bm{b}} = \FLOAT{\dfrac{\bm{b}}{\norm{\bm{b}}}}, \quad
     \tilde{\bm{c}} = \FLOAT{\dfrac{\bm{c}}{\norm{\bm{c}}}}, \quad
     \tilde{\bm{d}} = \FLOAT{\tilde{\bm{b}}\CROSS\tilde{\bm{c}}},\quad
     e = \FLOAT{\tilde{\bm{a}}\DOT\tilde{\bm{d}}},
  \end{EQ}
  then
  \begin{EQ}\label{eq:ineq:1}
     \abs{e-\frac{\bm{p}-\bm{q}}{\norm{\bm{p}-\bm{q}}}\DOT\left(
           \frac{\bm{r}-\bm{s}}{\norm{\bm{r}-\bm{s}}}\CROSS\frac{\bm{t}-\bm{u}}{\norm{\bm{t}-\bm{u}}}
           \right)
           }
     \leq 23.26 \umach
  \end{EQ}
\end{lemma}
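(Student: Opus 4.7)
The plan is to decompose the error into three sources: the floating-point error from normalizing each of $\bm{a}$, $\bm{b}$, $\bm{c}$; the error from the floating-point cross product $\tilde{\bm{b}} \CROSS \tilde{\bm{c}}$; and the error from the final dot product. Lemma~\ref{lem:10} already provides exactly the form needed for the first source.

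First, I would use the intermediate calculation in the proof of Lemma~\ref{lem:10} to write
\begin{EQ}
  \tilde{\bm{a}} = (\bm{I} + \bm{D}_1)\bm{v}, \qquad
  \tilde{\bm{b}} = (\bm{I} + \bm{D}_2)\bm{w}, \qquad
  \tilde{\bm{c}} = (\bm{I} + \bm{D}_3)\bm{x},
\end{EQ}
where $\bm{v},\bm{w},\bm{x}$ are the exact unit vectors appearing in the target expression and $\|\bm{D}_k\|_\infty \leq 5.418\,\umach$. Expanding the triple product,
\begin{EQ}
  \tilde{\bm{a}} \DOT (\tilde{\bm{b}} \CROSS \tilde{\bm{c}})
  = \bm{v} \DOT (\bm{w} \CROSS \bm{x})
  + (\bm{D}_1 \bm{v}) \DOT (\bm{w} \CROSS \bm{x})
  + \bm{v} \DOT ((\bm{D}_2 \bm{w}) \CROSS \bm{x})
  + \bm{v} \DOT (\bm{w} \CROSS (\bm{D}_3 \bm{x}))
\end{EQ}
up to $O(\umach^2)$ cross terms. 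Each first-order term is a mixed triple product in which one unit vector is replaced by a diagonal perturbation of it; using \eqref{eq:CS} together with $\|\bm{u} \CROSS \bm{r}\| \leq \|\bm{u}\|\|\bm{r}\|$ and the fact that $\|\bm{v}\|=\|\bm{w}\|=\|\bm{x}\|=1$, each such term is bounded by $\|\bm{D}_k\|_\infty \leq 5.418\,\umach$, contributing a total of at most $3 \cdot 5.418\,\umach$ from the normalization stage.

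Next, I would treat the floating-point cross product $\tilde{\bm{d}} = \FLOAT{\tilde{\bm{b}} \CROSS \tilde{\bm{c}}} = \tilde{\bm{b}} \CROSS \tilde{\bm{c}} + \bm{\epsilon}$. Each component is a difference of two products, so Lemma~\ref{lem:5} point 3 with $n=2$ bounds the componentwise error by $2\umach(|\tilde{b}_j||\tilde{c}_k| + |\tilde{b}_k||\tilde{c}_j|)$; since $|\tilde b_i|,|\tilde c_i| \leq 1$ up to small corrections from the normalization step, this yields an $O(\umach)$ bound on $\|\bm{\epsilon}\|_2$ that I would then propagate through the outer dot product via Cauchy--Schwarz with $\|\tilde{\bm{a}}\| \leq 1 + O(\umach)$. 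Finally, Lemma~\ref{lem:5} point 3 with $n=3$ gives $e = \tilde{\bm{a}}\DOT\tilde{\bm{d}}\,(1 + \delta_f)$ with $|\delta_f| \leq 3\umach/(1-3\umach/2)$, contributing roughly $3\umach\,\|\tilde{\bm{a}}\|\|\tilde{\bm{d}}\|$. Summing the three stages and invoking $\umach \leq 10^{-7}$ from Assumption~\ref{ass:1} to absorb the higher-order cross terms should land at the claimed $23.26\,\umach$.

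The main obstacle is getting a tight enough bound on $\|\bm{\epsilon}\|_2$ at the cross-product stage: a naive estimate $\sqrt{3}\|\bm{\epsilon}\|_\infty$ already consumes a nontrivial fraction of the error budget. I would apply the symmetric inequality $2|\tilde b_i||\tilde c_j| \leq \tilde b_i^2 + \tilde c_j^2$ (as in the proof of Lemma~\ref{lem:8}) to reduce the componentwise bound to a sum of squared normalized coordinates, and then use $\|\tilde{\bm{b}}\|^2,\|\tilde{\bm{c}}\|^2 \leq 1 + O(\umach)$ to collapse the total to a small multiple of $\umach$. With this together with careful bookkeeping of the three $5.418\umach$ normalization contributions and the $3\umach$ outer dot-product contribution, the tightness needed to reach the stated $23.26\,\umach$ should follow.
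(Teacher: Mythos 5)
Your proposal follows essentially the same route as the paper's proof: both write $\tilde{\bm{a}},\tilde{\bm{b}},\tilde{\bm{c}}$ as $(\bm{I}+\bm{D}_k)$ applied to the exact unit vectors with $\norm{\bm{D}_k}_\infty\leq 5.418\umach$ via the machinery of Lemma~\ref{lem:10}, expand the triple product to first order, bound the cross-product rounding with Lemma~\ref{lem:5}, and close with the dot-product bound, so that the budget $3\cdot 5.418\umach$ plus the cross- and dot-product rounding lands at $23.26\umach$. The only difference is bookkeeping: the paper folds the cross-product rounding into diagonal relative perturbations $\bm{D}'_2,\bm{D}'_3$ and a single matrix $\bm{E}$ with $\norm{\bm{E}}_\infty\leq 14.841\umach$, whereas you keep it as an additive componentwise error of the form $2\umach(\abs{\tilde b_j}\abs{\tilde c_k}+\abs{\tilde b_k}\abs{\tilde c_j})$, which is if anything the more defensible reading of Lemma~\ref{lem:5} and still fits within the stated constant.
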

\begin{proof}
  Using the same arguments of Lemma~\ref{lem:10}
  \begin{EQ}
    \tilde{\bm{a}} = (\bm{I}+\bm{D}_1)\dfrac{\bm{p}-\bm{q}}{\norm{\bm{p}-\bm{q}}},
    \qquad
    \tilde{\bm{b}} = (\bm{I}+\bm{D}_2)\dfrac{\bm{p}-\bm{q}}{\norm{\bm{p}-\bm{q}}},
    \qquad  
    \tilde{\bm{c}} = (\bm{I}+\bm{D}_3)\dfrac{\bm{p}-\bm{q}}{\norm{\bm{p}-\bm{q}}},
  \end{EQ}
  and
  \begin{EQ}
    \max\big\{\norm{\bm{D}_1}_\infty,\norm{\bm{D}_2}_\infty,\norm{\bm{D}_3}_\infty\big\}
    \leq 5.418 \umach
  \end{EQ}
  from Lemma~\ref{lem:5} with $a$, $b$, $c$, $d$ such that $\abs{a}$, $\abs{b}$, $\abs{c}$,
  $\abs{d}$ less or equal that $1$:
  \begin{EQ}
    \FLOAT{ab-cd}=(ab-cd)(1+\epsilon_1),\qquad \abs{\epsilon_1}\leq \frac{2\umach}{1-\umach} \leq 2.001 \umach
  \end{EQ}
  and, thus,
  \begin{EQ}[rcll]
    \tilde{\bm{d}} &=&
    (\bm{I}+\bm{D}_4)\left(
    (\bm{I}+\bm{D}_2)\dfrac{\bm{p}-\bm{q}}{\norm{\bm{p}-\bm{q}}}
    \times
    (\bm{I}+\bm{D}_3)\dfrac{\bm{p}-\bm{q}}{\norm{\bm{p}-\bm{q}}}
    \right),
    \qquad
    &
    \norm{\bm{D}_4}_\infty\leq 2.001 \umach,
    \\
    \tilde{\bm{d}} &=&
    (\bm{I}+\bm{D}'_2)\dfrac{\bm{p}-\bm{q}}{\norm{\bm{p}-\bm{q}}}
    \times
    (\bm{I}+\bm{D}'_3)\dfrac{\bm{p}-\bm{q}}{\norm{\bm{p}-\bm{q}}},
    \qquad
    &
    \left\{
    \begin{array}{c}
    \norm{\bm{D}'_2}_\infty \\[0.4em]
    \norm{\bm{D}'_3}_\infty
    \end{array}
    \right\}
    \leq 7.420 \umach,
    \\
    \tilde{\bm{d}} &=&
    \dfrac{\bm{p}-\bm{q}}{\norm{\bm{p}-\bm{q}}}
    \times
    \dfrac{\bm{p}-\bm{q}}{\norm{\bm{p}-\bm{q}}}+\bm{E},
    \qquad
    &
    \norm{\bm{E}}_\infty
    \leq 14.841 \umach,
  \end{EQ}
  and using lemma Lemma~\ref{lem:5} for the scalar product inequality~\eqref{eq:ineq:1}
  follows.
\qed
\end{proof}

\begin{lemma}[A posteriori error bound]\label{lem:13}
  The accuracy of floating point computation of angle $\theta_{ij}$ satisfy
  \begin{EQ}
     \abs{\theta_{ij}-\ttheta_{ij}} \leq 
     \left( 2.829 + 
     \dfrac{57.516}{R}+
     \dfrac{57.516}{R'}
     \right) \umach,
     \\
     R = \FLOAT{\sqrt{x^2+y^2}},\quad
     R' = \FLOAT{\sqrt{(x')^2+(y')^2}},
  \end{EQ}
  where $R$ and $R'$ aere the length of vector $(x,y)$ and $(x',y')$ computed
  using floating point arithmetics.
\end{lemma}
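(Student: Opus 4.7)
The plan is to exploit the decomposition provided by Lemma~\ref{lem:3}, which writes the exact angle as
\[
\theta_{ij} \;=\; T(\pp,\pp',\qq,\qq') \;=\; \ANGLE{[x,-y,0] \oplus [x',y',0]},
\]
and then split the floating-point error into three independent contributions: (i) the error in computing the triple $(x,y)$ from~(\ref{eq:xy}), (ii) the analogous error in $(x',y')$, and (iii) the error of performing the $\oplus$-sum on the two computed triples.

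For (i) I would observe that $x$ in~(\ref{eq:xy}) has exactly the structure $1 + t_1 + t_2 + t_3$ analyzed in Lemma~\ref{lem:11}, with the $t_k$ being dot products of the unit-normalized vectors $\talpha$, $\tbeta$, $\tgamma$; this yields $|\tx - x| \leq 57.515\,\umach$. The companion $y = \tbeta\DOT(\talpha\CROSS\tgamma)$ fits the triple-product pattern of Lemma~\ref{lem:12}, giving $|\ty - y| \leq 23.26\,\umach$. Interchanging the roles of $\bm\beta$ and $\bm\omega$ gives the same bounds for $(x',y')$.

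For (ii) I would apply Lemma~\ref{lem:7} with $\delta^2 = (\tx-x)^2 + (\ty-y)^2$, converting the coordinatewise bounds into an angular error essentially of the form $\delta/R$, where $R=\sqrt{x^2+y^2}$; the cubic correction $\delta^3/(6R^3)$ is negligible under Assumption~\ref{ass:1} and is absorbed into the constant. The same analysis applied to the triple $(x',y')$ produces a term of order $\umach/R'$. A minor subtlety is that in the stated inequality $R$ and $R'$ denote the \emph{computed} norms $\FLOAT{\sqrt{x^2+y^2}}$ and $\FLOAT{\sqrt{(x')^2+(y')^2}}$ rather than the exact ones; by Corollary~\ref{cor:6} these differ from the true norms by a factor $1+O(\umach)$, which perturbs the estimate only at order $\umach^2$ and is again absorbed.

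For (iii) Lemma~\ref{lem:8} supplies a flat contribution of $2.829\,\umach$ to the angular error of the $\oplus$-operation, exactly as needed, provided both input triples $[x,-y,0]$ and $[x',y',0]$ have been rescaled to satisfy $R,R'\in[1/2,1]$ (which is precisely the normalization enforced by the algorithm in Table~\ref{tab:link:rec}). Summing the three contributions yields the claimed bound. The main technical obstacle I expect lies in step (ii): carefully combining the two coordinatewise bounds of Lemmata~\ref{lem:11} and~\ref{lem:12} through the Euclidean distance $\delta$ of Lemma~\ref{lem:7} and verifying that, once the effect of the $\umach$-relative error in the computed norms and the higher-order Taylor remainder are folded in, the resulting constant tightens down to the stated $57.516$.
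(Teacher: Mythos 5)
Your proposal follows essentially the same route as the paper: Lemmas~\ref{lem:11} and~\ref{lem:12} for the coordinatewise errors in $(x,y)$ and $(x',y')$, Lemma~\ref{lem:7} (together with Corollary~\ref{cor:6} to pass from the exact to the computed norms $R$, $R'$) to convert these into angular errors of size $57.516\,\umach/R$ and $57.516\,\umach/R'$, and Lemma~\ref{lem:8} for the flat $2.829\,\umach$ cost of the $\oplus$-sum. The one subtlety you flag --- whether the Euclidean combination $\sqrt{\delta_x^2+\delta_y^2}$ of the two coordinatewise bounds actually fits under $57.515\,\umach$ before entering Lemma~\ref{lem:7} --- is glossed over in the paper's own proof as well, which simply inserts $57.515\,\umach$ as the $\delta$ of Lemma~\ref{lem:7}.
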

\begin{proof}
From Lemma~\ref{lem:11} and~\ref{lem:12} the floating point computation
of $x$, $y$, $x'$ and $y'$ of equation \eqref{eq:xy} in Lemma~\ref{lem:3} satisfy:
\begin{EQ}
  \cases{
     x = \FLOAT{x}+\delta_x, & \\
     y = \FLOAT{y}+\delta_y, &
  }
  \;
  \cases{
     x' = \FLOAT{x'}+\delta_{x'}, & \\
     y' = \FLOAT{y'}+\delta_{y'}, &
  }
  \;
  \left\{
     \begin{array}{c}
     \abs{\delta_{x}} \\[0.4em]
     \abs{\delta_{x'}}
     \end{array}
  \right\}
  \leq 57.515 \umach,
  \;
  \left\{
     \begin{array}{c}
     \abs{\delta_{y}} \\[0.4em]
     \abs{\delta_{y'}}
     \end{array}
  \right\}
  \leq 20.257 \umach
\end{EQ}
Let be $\theta$, $\ttheta$,  $\theta'$ and $\ttheta'$ the angle corresponding to 
vectors $(x,y)$, $(\tx,\ty)$, $(x',y')$ and $(\tx',\ty')$, respectively.
From Lemma~\ref{lem:7}
\begin{EQ}[rcl]
   \abs{\theta-\ttheta}&\leq& \dfrac{57.515}{\sqrt{\FLOAT{x}^2+\FLOAT{x}^2}}\umach\left(1+\dfrac{551.3293}{\FLOAT{x}^2+\FLOAT{x}^2}\umach^2\right),
   \\
   \abs{\theta'-\ttheta'}&\leq& \dfrac{57.515}{\sqrt{\FLOAT{x'}^2+\FLOAT{y'}^2}}\umach\left(1+\dfrac{551.3293}{\FLOAT{x'}^2+\FLOAT{y'}^2}\umach^2\right)
\end{EQ}
and from corollary~\ref{cor:6}
\begin{EQ}
  \FLOAT{\sqrt{x^2+y^2}}
  = \sqrt{x^2+y^2}\,(1+\delta),
  \qquad
  \abs{\delta}\leq\left(\sqrt{2}+\dfrac{3}{2}\right)\umach \leq 2.9143 \umach
\end{EQ}
so that
\begin{EQ}
   \abs{\theta-\ttheta}\leq \dfrac{57.516}{\FLOAT{\sqrt{x^2+y^2}}}\umach,
   \qquad
   \abs{\theta'-\ttheta'}\leq \dfrac{57.516}{\FLOAT{\sqrt{(x')^2+(y')^2}}}\umach.
\end{EQ}
Angle $\theta_{ij}$ is the sum
of $\theta+\theta'$ performed using algorithm of definition~\ref{triple:plus:def},
and thus
\begin{EQ}
   \ttheta_{ij}=\ttheta+\ttheta'+\epsilon
\end{EQ}
where $\epsilon$ is the error due to the summation of the angles
represented as triple, that by Lemma~\ref{lem:8} is less than $2.829\umach$.\qed
\end{proof}
\begin{lemma}[A priory error bound]\label{lem:14}
  Let $\pp_i$, $\pp_{i+1}$, $\qq_j$ and $\qq_{j+1}$ such that~\eqref{eq:strict:cond} is satisfied
  then the error in the computation of $\Delta\Theta_{ij}$ is less than $117.861 \umach$.
\end{lemma}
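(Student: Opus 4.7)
The plan is to apply Lemma~\ref{lem:13} and to show that under hypothesis~\eqref{eq:strict:cond} both $R$ and $R'$ are bounded below by $1$; the a posteriori bound then collapses to $\abs{\delta\Theta_{ij}}\le (2.829+57.516+57.516)\,\umach=117.861\,\umach$. The non-trivial work is therefore the purely geometric statement: condition~\eqref{eq:strict:cond} implies $R\ge 1$ and $R'\ge 1$.

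For $R$, I would first rewrite $R^2=x^2+y^2$ in the closed form implicit in the proof of Lemma~\ref{lem:3}. Setting $a=\talpha\DOT\tbeta$, $b=\talpha\DOT\tgamma$, $c=\tbeta\DOT\tgamma$ and $V=\talpha\DOT(\tbeta\CROSS\tgamma)$, the Gram identity gives $y^2=V^2=1-a^2-b^2-c^2+2abc$ and $x=1+a+b+c$, whence
\begin{EQ}
R^2=2(1+a)(1+b)(1+c)=\tfrac{1}{4}\,\norm{\talpha+\tbeta}^2\,\norm{\talpha+\tgamma}^2\,\norm{\tbeta+\tgamma}^2.
\end{EQ}
Since $\norm{\bm u+\bm v}=2\cos(\vartheta/2)$ for unit vectors separated by an angle $\vartheta\in[0,\pi]$, this reads $R=4\cos(\vartheta_{\alpha\beta}/2)\cos(\vartheta_{\alpha\gamma}/2)\cos(\vartheta_{\beta\gamma}/2)$, so the task reduces to suitable lower bounds on three half-angle cosines.

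Next I would translate~\eqref{eq:strict:cond} into angle bounds. The law of cosines gives $\norm{\talpha+\tbeta}^2=\bigl((\norm{\bm\alpha}+\norm{\bm\beta})^2-\norm{\Delta\pp}^2\bigr)/(\norm{\bm\alpha}\norm{\bm\beta})$, which together with $\norm{\Delta\pp}\le C_{ij}\le\min(\norm{\bm\alpha},\norm{\bm\beta})$ yields $\norm{\talpha+\tbeta}^2\ge 3$, i.e.\ $\vartheta_{\alpha\beta}\le\pi/3$. The symmetric estimate using $\bm\gamma-\bm\beta=\Delta\qq$ gives $\vartheta_{\beta\gamma}\le\pi/3$. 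The third angle is then controlled, not from the law of cosines applied to $\norm{\bm\alpha-\bm\gamma}\le 2C_{ij}$ (which is too weak), but via the spherical triangle inequality $\vartheta_{\alpha\gamma}\le\vartheta_{\alpha\beta}+\vartheta_{\beta\gamma}\le 2\pi/3$. Consequently $\cos(\vartheta_{\alpha\beta}/2),\cos(\vartheta_{\beta\gamma}/2)\ge\sqrt{3}/2$ and $\cos(\vartheta_{\alpha\gamma}/2)\ge 1/2$, so $R\ge 4\cdot(3/4)\cdot(1/2)=3/2\ge 1$. The identical argument on the triple $(\talpha,\tomega,\tgamma)$, using $\bm\omega-\bm\alpha=\Delta\qq$ and $\bm\omega-\bm\gamma=\Delta\pp$, delivers $R'\ge 1$; substitution into Lemma~\ref{lem:13} then closes the proof.

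The main obstacle is the non-adjacent angle $\vartheta_{\alpha\gamma}$: the straightforward bound $\norm{\bm\alpha-\bm\gamma}=\norm{\Delta\pp-\Delta\qq}\le 2C_{ij}$ is too weak, since it formally leaves open the possibility $\talpha\approx-\tgamma$ in which $R\to 0$. The clean route is to leverage the two cheap $\pi/3$ bounds on the adjacent angles through the spherical triangle inequality; this is what converts the segment-length hypothesis~\eqref{eq:strict:cond} into a uniform lower bound on $R$ and $R'$.
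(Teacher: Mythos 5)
Your proof is correct, and its skeleton --- invoke Lemma~\ref{lem:13} and show that condition~\eqref{eq:strict:cond} forces $R\ge 1$ and $R'\ge 1$, so that $2.829+57.516+57.516=117.861$ --- is exactly the paper's. Where you diverge is in how the lower bound on $R$ is extracted. The paper never touches the non-adjacent angle $\vartheta_{\alpha\gamma}$: it writes $x=1+\talpha\DOT\tgamma+\talpha\DOT\tbeta+\tbeta\DOT\tgamma$, uses the same two adjacent-angle bounds $\talpha\DOT\tbeta\ge 1/2$ and $\tbeta\DOT\tgamma\ge 1/2$ that you derive (obtained there by the one-line remark that these angles are opposite the shortest sides of the triangles $\pp\pp'\qq$ and $\pp'\qq\qq'$, hence at most $\pi/3$), bounds the remaining term crudely by $\talpha\DOT\tgamma\ge -1$, and concludes $x\ge 1$, hence $R=\sqrt{x^2+y^2}\ge\abs{x}\ge 1$. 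Your factorization $R^2=2(1+a)(1+b)(1+c)$ is correct (and illuminating), but it is precisely what creates the ``main obstacle'' you describe: in product form the factor $1+b$ could a priori vanish, so you are forced to control $\vartheta_{\alpha\gamma}$ through the spherical triangle inequality. That extra step is sound and buys a sharper constant, $R\ge 3/2$ rather than $R\ge 1$ (which also gives some slack against the fact that the $R$ appearing in Lemma~\ref{lem:13} is the floating-point value of $\sqrt{x^2+y^2}$, a point both arguments pass over silently); but it is dispensable, since the additive form of $x$ already absorbs the worst case $\talpha\DOT\tgamma=-1$ without any information about the third angle.
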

\begin{proof}
  From equation~\eqref{eq:xy}
  $
    x=
    1+\talpha\DOT\tgamma+\talpha\DOT\tbeta+\tbeta\DOT\tgamma=
    1+\cos\theta_{\alpha,\gamma}+\cos\theta_{\alpha,\beta}+\cos\theta_{\beta,\gamma}
  $
  where $\theta_{\alpha,\gamma}$, $\theta_{\alpha,\beta}$ and $\theta_{\beta,\gamma}$
  are the angles between the respective vectors.
  The scalar product $\talpha\DOT\tgamma$ can take any value in $[-1,1]$, thus,
  to ensure $x\geq 1$ it is enough that $\talpha\DOT\tbeta\geq1/2$ and $\tbeta\DOT\tgamma\geq1/2$
  or $\theta_{\alpha,\beta}\leq\pi/3$ and $\theta_{\beta,\gamma}\leq\pi/3$.
  To ensure this last conditions it is enough that the named angles are 
  opposite to the minimum length edge of the triangle.
  This condition is ensured by inequality~\eqref{eq:strict:cond}.
  Similar argument follow for $x'$.
  If $x$ and $x'$ are greater that $1$ then $R$ and $R'$ of Lemma~\ref{lem:13} are 
  greater or equal to $1$ so that $\abs{\Delta\Theta_{ij}-\widetilde{\Delta\Theta_{ij}}}\leq 117.861 \umach$.
  \qed
\end{proof}

\bibliographystyle{spmpsci}      
\bibliography{LinkingNumber-bibliography} 

\end{document}